\newtheorem{thm}{Theorem}[section]
\newtheorem{lem}[thm]{Lemma}
\theoremstyle{definition}
\newtheorem{defn}[thm]{Definition}
\newtheorem{assump}[thm]{Assumption}
\newtheorem{examp}[thm]{Example}
\theoremstyle{remark}
\newtheorem{rem}[thm]{Remark}
\newcolumntype{M}[1]{>{\centering\arraybackslash}m{#1}} 
\newcommand{\rh}{0.09} 
\title{Weighted sieves with switching}
\author{Kaisa Matom\"aki}
\address{Department of Mathematics and Statistics, University of Turku, 20014 Turku, Finland}
\email{ksmato@utu.fi}
\author{Sebastian Zuniga-Alterman}
\address{Department of Mathematics and Statistics, University of Turku, 20014 Turku, Finland}
\email{szualt@utu.fi}
\begin{document}

\begin{abstract}
Weighted sieves are used to detect numbers with at most $S$ prime factors with $S \in \mathbb{N}$ as small as possible. When one studies problems with two variables in somewhat symmetric roles (such as Chen primes, that is primes $p$ such that $p+2$ has at most two prime factors), one can utilize the switching principle. Here we discuss how different sieve weights work in such a situation, concentrating in particular on detecting a prime along with a product of at most three primes. 

As applications, we improve on the works of Yang and Harman concerning Diophantine approximation with a prime and an almost prime, and prove that, in general, one can find a pair $(p, P_3)$ when both the original and the switched problem have level of distribution at least $0.267$.
\end{abstract}

\maketitle

\section{Introduction}
Classical sieve methods can be used to study integers without small prime factors. In particular, under certain hypotheses, they can give upper and lower bounds for 
\begin{equation}
\label{eq:SAzdef}
S(A, z) := \{n \in A \colon (n, P(z)) = 1\},
\end{equation}
where $A \subseteq \mathbb{N}$ and $P(z) = \prod_{p < z} p$. Among other things, this allows one to detect numbers that have only few prime factors --- if $A \subseteq [1, x] \cap \mathbb{N}$ and $S(A, x^{1/v}) > 0$, then $A$ necessarily contains numbers with at most $\lceil v-1 \rceil$ distinct prime factors. 

Kuhn~\cite{Kuhn41, Kuhn} first observed that if one attaches a weight $W(n)$ to every $n \in A$, with $W(n)= 1-\sum_{p \mid n} w_p$ and suitable coefficients $w_p \geq 0$, then one can find integers in $A$ with even fewer prime factors than with the above argument. Weighted sieves have then been developed by various authors, see e.g.~\cite[Chapters 9--10]{HalRich} and~\cite[Chapter 5]{GreavesBook} for more information. 

We shall reserve the letter $p$ with or without subscripts for primes and the notation $P_k$ for numbers with at most $k$ prime factors.

As an example, let us consider the twin prime conjecture. Take 
\[
A = \{p + 2 \colon p \text{ prime}, p \leq x\}.
\] 
Using the linear sieve (see Lemma~\ref{linearsieve} below), with level of distribution $1/2-\varepsilon/3$ coming from the Bombieri-Vinogradov theorem (for the definition of level of distribution, see Definition~\ref{def:lod} below), one obtains
\[
S(A, x^{1/4-\varepsilon}) > c_0 \frac{x}{\log^2 x}
\]
for some constant $c_0 = c_0(\varepsilon)$. This implies that $p+2 = P_4$ infinitely often. The level of distribution $1/2-\varepsilon/3$ is sufficient to replace $P_4$ by $P_3$ if one uses a weighted sieve with well-chosen weights $w_p$, see for instance~\cite[Theorem 9.2]{HalRich}.

Optimal weighted sieve coefficients are not known but there has been some work surrounding them, see for instance~\cite[Chapter 5]{GreavesBook}.

In 1973, Chen~\cite{Chen} made a breakthrough in the twin prime problem by showing that $p+2 = P_2$ infinitely often. Primes of this form are often called Chen primes. Chen had the ingenious idea of switching the roles of the variables in some parts of the argument. He first proved a lower bound for the number of primes such that $p+2 = P_3$ (with the prime factors of $P_3$ belonging to certain ranges). He then removed the contribution of products of three primes by applying an upper bound sieve to the sequence $p_1 p_2 p_3 -2$, thus sieving the prime $p$. Later Fouvry and Grupp~\cite{FouvryGrupp} managed to show that $p+2 = P_2$ infinitely often without using switching, but a better level of distribution coming from deep estimates for sums of Kloosterman sums.

There has been some interest in optimizing weights in Chen's set-up, see e.g. Wu~\cite{Wu1, Wu2}, which for instance leads to improved lower bounds for the number of Chen primes.

Chen's switching idea ~\cite{Chen} has also been utilized in finding pairs $(p, P_k)$ with $k \geq 3$ in various contexts like for instance the ones found in \cite{Vaughan}, \cite{Harman83} and \cite{LZX}. However, it seems like little attention has been paid to the corresponding choice of weights. The aim of this note is to initiate the study of this subject, and set up the scene for further investigations. Most of what we do works in a very general setting for detecting pairs $(p, P_k)$ using flexible weights. Nonetheless, we will in particular study the case $k = 3$ and illustrate the effect of some weights, which we intend to keep rather simple. Using other weights might lead to improvements on our results.

The following application demonstrates how one can obtain stronger results by simple changes to the sieve weights.
\begin{thm}
\label{th:HarmanImprove}
Let $\lambda_0 \in \mathbb{R}$ and let $\lambda_1, \lambda_2 \in \mathbb{R}\setminus\{0\}$ be such that $\lambda_1/\lambda_2$ is negative and irrational. Then there are infinitely many solutions to
\[
|\lambda_0 + \lambda_1 p + \lambda_2 P_3| < p^{-\rho}
\]
with $\rho = \rh$.
\end{thm}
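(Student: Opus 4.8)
The plan is to run a Davenport--Heilbronn style argument for the inequality and to feed into it a weighted lower bound sieve with switching that forces the second variable to be a $P_3$. After relabelling we may assume $\lambda_1>0>\lambda_2$; fix a large parameter $x$ and restrict to primes $p\asymp x$. Since $\lambda_1/\lambda_2<0$, the inequality $|\lambda_0+\lambda_1 p+\lambda_2 n|<p^{-\rho}$ confines $n$ to a real interval $I_p$ of length $\asymp x^{-\rho}$ about $m_p:=-(\lambda_0+\lambda_1 p)/\lambda_2\asymp x$, so we are really counting, with a sieve weight, integers having at most three prime factors that fall in the very short intervals $I_p$ as $p$ runs over primes near $x$. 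To detect the inequality harmonically we take a band-limited minorant: a function $\Delta$ with $\widehat\Delta$ supported in $[-1,1]$, $\Delta(u)\le\mathds{1}_{(-1,1)}(u)$ and $\int\Delta>0$ (e.g.\ Selberg's extremal minorant). Applying $\Delta$ to $x^{\rho}(\lambda_0+\lambda_1 p+\lambda_2 n)$ and expanding by Fourier inversion bounds the number of solutions with $p,n\asymp x$ from below by
\[
x^{-\rho}\int_{-T}^{T}\widehat\Delta(t/T)\,e(\lambda_0 t)\,S_P(\lambda_1 t)\,S_a(\lambda_2 t)\,dt,
\]
where $T:=x^{\rho}$, $e(\theta):=e^{2\pi i\theta}$, $S_P(\alpha):=\sum_{p\asymp x}e(\alpha p)$, $S_a(\alpha):=\sum_n a(n)e(\alpha n)$, and $a(n)\ge 0$ is a sieve weight, supported on $n\asymp x$, whose positivity is to be arranged to force $n$ to be a $P_3$.

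The integral splits into $|t|\le t_0$ and $t_0<|t|\le T$. On the former, $S_P(\lambda_1 t)\sim x/\log x$ and $S_a(\lambda_2 t)\sim\sum_n a(n)$, so this part produces a positive main term that is a fixed positive power of $x$ --- hence tends to infinity --- once the sieve delivers $\sum_n a(n)\gg x^{1-\rho}/(\log x)^{O(1)}$. For $t_0<|t|\le T$ one invokes the classical dichotomy powered by the irrationality of $\lambda_1/\lambda_2$: whenever $\lambda_1 t$ is close to a rational with small denominator, $\lambda_2 t$ is not, so at least one of $S_P(\lambda_1 t)$, $S_a(\lambda_2 t)$ exhibits genuine cancellation. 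Bounding $S_P$ uses Vaughan's identity and standard estimates for exponential sums over primes; bounding $S_a$ reduces, via the Buchstab/linear-sieve decomposition of $a$, to geometric and bilinear exponential sums handled by the continued-fraction method and Vinogradov-type bounds. Making the minor-arc estimate beat the main term over the whole range up to $T=x^{\rho}$ is exactly what limits $\rho$: the larger $\rho$, the longer the integral and the weaker the usable input, and $\rho=\rh$ is the threshold the current best estimates allow.

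It remains to build $a(n)$. For a sequence whose level of distribution is below $1/2$ a plain linear-sieve (Lemma~\ref{linearsieve}) lower bound cannot reach $P_3$ --- compare the twin-prime discussion above, where even level close to $1/2$ gives only $P_4$ without weights --- so we use a weighted sieve of Kuhn--Richert type, $\sum_n a(n)=\sum_{n\in A}\bigl(1-\sum_{p\mid n,\,p<y}w_p\bigr)\mathds{1}_{(n,P(z))=1}$ with suitable $z<y$ and weights $w_p\ge 0$, the combinatorics arranged so that the summand is $\le 0$ unless $n$ has at most three prime factors. A lower bound for $\sum_n a(n)$ is obtained by Buchstab iteration together with the linear sieve, which requires a level of distribution $\theta_1$ for $A=\{n\in I_p:\ p\asymp x\}$ (Definition~\ref{def:lod}); this $\theta_1$ comes from the exponential-sum bounds above applied to $\sum_{n\in A,\,d\mid n}1$. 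The ``hard'' Buchstab terms --- those $n=p_1p_2p_3$ with comparable prime factors, which would otherwise force $\theta_1\ge 1/3$ --- are controlled by Chen's switching: one bounds their contribution by treating $p$ as the sifted variable in the sequence indexed by $(p_1,p_2,p_3)$ with $p\in I_{p_1p_2p_3}$, which requires a level of distribution $\theta_2$ for this switched sequence. One then checks that for the Diophantine sequence both $\theta_1$ and $\theta_2$ exceed, with room to spare, the threshold (around $0.267$) demanded by the optimized weighted-sieve-with-switching computation, precisely when $\rho\le\rh$.

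The main obstacle --- and the point of the ``weighted sieves with switching'' study announced above --- is the joint optimization: one wants the weights $w_p$, the parameters $y,z$ and the switching ranges chosen so that the level of distribution needed to detect a $P_3$ is as small as possible, while simultaneously the exponential-sum input (hence the attainable level of distribution and the admissible length $T=x^{\rho}$) is pushed as far as possible, so that the two sides meet. No single ingredient is deep, but the bookkeeping of the Buchstab decomposition, the choice of which terms to switch, and the Davenport--Heilbronn treatment of each exponential sum arising must all be balanced numerically, and it is this balance that pins the exponent at $\rho=\rh$.
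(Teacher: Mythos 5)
Your overall architecture (Diophantine inequality $\to$ arithmetic input giving levels of distribution $\to$ weighted sieve with switching) is in the right spirit, but the analytic route and, more importantly, the substance differ from what is actually needed. The paper does not run a Davenport--Heilbronn/Selberg-minorant argument over $\mathbb{R}$; following Harman, it rationalizes the problem via a convergent $a/q$ of $\lambda_1$ (with $X=q^{8/5}$), reducing to counting $n=[(b+pa)/q]$ with $\Vert (b+pa)/q\Vert<X^{-\rho}/2$, and then imports the arithmetic input wholesale from Harman's Lemmas 9 and 10: level $\theta_1=1/3-\varepsilon$ for the original sequence and $\theta_2(\alpha)=\tfrac12(1-\alpha)-\rho-2\eta$ for the switched sequence. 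A Fourier-analytic variant is not automatically wrong, but as written your version hides all the content in unproved assertions: you never establish any level of distribution for either the original or the switched sequence (these are exactly the bilinear/exponential-sum estimates that constitute the arithmetic input), and you never perform the weighted-sieve computation. The claim that ``both $\theta_1$ and $\theta_2$ exceed, with room to spare, the threshold (around $0.267$) \dots precisely when $\rho\le 0.09$'' is reverse-engineered rather than derived: in the paper the exponent $0.09$ does not come from comparing levels with $0.267$ at all, but from a direct numerical positivity check of $\Sigma_1-\Sigma_2$ using Kuhn's weights $w(\alpha)=\tfrac12\mathbf{1}_{[1/v,1/u]}$ with $u=6.6$, $v=23$, the lower bound \eqref{eq:S1Harman} with $f,F$, and the upper bound for the switched sum via the Buchstab-type functions $C_3$ in \eqref{eq:U2dec2} with the $\alpha$-dependent $\theta_2(\alpha)$. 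Since Harman and Yang, with the same arithmetic input, obtained only $1/300$ and $1/131$, the numerical optimization is precisely where the theorem lives; a proof that asserts the final threshold without carrying out (or even setting up) these integrals has a genuine gap.

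There is also a structural misstatement in how you deploy the sieve inside the Fourier framework: you posit a nonnegative weight $a(n)\ge 0$ ``whose positivity forces $n$ to be a $P_3$.'' No such weight with the required level of distribution is available; the mechanism is the difference $\Sigma_1-\Sigma_2'>0$ in \eqref{eq:posclaim}, where $W(n)\mathbf{1}_{(n,P(x^{1/v}))=1}$ takes negative values and the contribution of $n$ with $\Omega(n)\ge 4$ is removed by \emph{switching}: one sieves the prime variable in the sequence indexed by the almost-prime side, which is why a separate level of distribution $\theta_2(\alpha)$ (depending on the smallest prime factor $p_1\sim x^\alpha$) is needed. Relatedly, switching is not used to rescue ``hard Buchstab terms that would force $\theta_1\ge 1/3$''; it is used to bound the surviving $n$ with at least $S+1$ prime factors. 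To repair the proposal you would need (i) the analogues of Harman's Lemmas 9 and 10 (or your exponential-sum substitutes) proved with explicit exponents in terms of $\rho$, and (ii) the explicit evaluation of the weighted-sieve main terms (the $f$, $F$ and $C_3$ integrals) showing positivity at $\rho=0.09$ for a concrete choice of $u$, $v$ and weights --- neither of which is present.
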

This theorem improves on the work of Yang~\cite{Yang} that had exponent $\rho = 1/131$, which in turn improved on the work of Harman~\cite{Harman83} with $\rho = 1/300$. Here, we use the same arithmetic information as in those previous works, but with different sieve weights --- actually Harman~\cite{Harman83} wrote that $\rho$ could be improved ``by using better weights (for example see Section 2 of~\cite{HH-BR}) and making more use of Chen's technique''. What is perhaps surprising about Theorem~\ref{th:HarmanImprove} is the quality of the improvement and the fact that it is obtained using simpler sieve weights rather than more complicated ones. Unfortunately, despite the significant improvement on $\rho$, we are not able to replace $P_3$ by $P_2$ for any positive $\rho$, which would be the most desired result in this problem.

We will also show that if one has level of distribution $0.267$ in both the original and the switched problem, then one can find pairs $(p, P_3)$.  We will state this result formally only upon stating the needed assumptions precisely (see Theorem~\ref{th:constLOD} below). Note that in~\cite{LZX}, where Vaughan's method~\cite{Vaughan} was used, a stronger level of distribution $0.34096$ was required (see Remark~\ref{re:LZX} below for more information).

The paper is organized in the following way. In Section \ref{notation}, we recall some auxiliary results needed to carry out our work, specially those involving sums over primes and the linear sieve. In Section \ref{se:set-up}, we present and discuss the basic set-up and assumptions in our general sieving situation as well as apply the linear sieve to reduce our task of finding pairs $(p, P_k)$ to the task of finding a positive lower bound for the difference of two expressions $\Sigma_1$ and $\Sigma_2$. Toward the end of Section~\ref{se:set-up} we state Theorem~\ref{th:constLOD} mentioned above and provide Example \ref{ex:weights}, where we present three important specific choices of weights.

In Section \ref{lower}, we find a lower bound for $\Sigma_1$, whereas in Section \ref{upper} we find an upper bound for $\Sigma_2$. In Section \ref{se:Harman}, we illustrate the effect of the weights from Example~\ref{ex:weights} in the context of Theorem~\ref{th:HarmanImprove}, and prove Theorem~\ref{th:HarmanImprove} in Section~\ref{ssec:HarKuhn}. In Section~\ref{proof}, we prove Theorem~\ref{th:constLOD}. Finally, in Section \ref{se:U2gen} we present a way to deal with more general weights than those needed for our theorems.

All the numerical calculations have been carried out using Mathematica and are available alongside the arXiv version of this work.

\section{Notation and auxiliary results}\label{notation}
For $n \in \mathbb{N}$, we write $\Omega(n)$ for the total number of prime factors of $n$ and $\omega(n)$ for the number of distinct prime factors.

\subsection{Sums over primes}
\label{ssec:sumsoverprimes}
We will regularly utilize the standard device of using the prime number theorem to transform sums over primes to integrals. For convenience, we describe an example of this technique here. Indeed, letting $v \colon [0, 1] \to [1,2]$ be a Lipschitz function, we have by the prime number theorem that
\begin{align*}
&\sum_{\substack{x^{1/20} < p_1 < p_2 < p_3 < x^{1/2} \\ x^{1/4} < p_1 p_2 p_3 \leq x^{3/4} \\ p_1^2 p_2 \leq x^{2/3}}} \frac{v(\frac{\log p_1}{\log x})}{p_1 p_2 p_3 \log\frac{x}{p_1 p_2 p_3}} \\
&= (1+o(1)) \iiint\limits_{\substack{x^{1/20} < t_1 < t_2 < t_3 < x^{1/2} \\ x^{1/4} < t_1 t_2 t_3 \leq x^{3/4} \\ t_1^2 t_2 \leq x^{2/3}}} \frac{v(\frac{\log t_1}{\log x})}{(t_1 \log t_1) (t_2 \log t_2) (t_3 \log t_3) \log\frac{x}{t_1 t_2 t_3}} dt_1 dt_2 dt_3.
\end{align*}
Substituting $t_j = x^{\alpha_j}$, the above expression equals
\[
\frac{1+o(1)}{\log x} \iiint\limits_{\substack{1/20 < \alpha_1 < \alpha_2 < \alpha_3 < 1/2 \\ 1/4 < \alpha_1 + \alpha_2 + \alpha_3 \leq 3/4 \\ 2\alpha_1 + \alpha_2 \leq 2/3}} \frac{v(\alpha_1)}{\alpha_1 \alpha_2 \alpha_3 (1-\alpha_1 -\alpha_2 -\alpha_3)} d\alpha_1 d\alpha_2 d\alpha_3.
\]

\subsection{Sums over rough numbers}
\label{ssec:rough}
We shall need asymptotic formulas for sums over rough numbers. Define first the \textit{Buchstab function} $\omega_B \colon \mathbb{R}_{\geq 0} \to \mathbb{R}_{\geq 0}$ by $\omega_B(u) = 0$ if $u \in [0, 1)$, $\omega_B(u) = 1/u$ if $u \in [1, 2]$ and $(u \omega_B(u))' = \omega_B(u-1)$ if $u > 2$. 
\begin{lem}
\label{le:Buchstab}
Let $\varepsilon > 0$, $x \geq 2$ and $z \in [x^\varepsilon, x^{1-\varepsilon}]$. Then
\[
\sum_{\substack{n \leq x \\ (n, P(z)) = 1}} 1 = \frac{x}{\log z} \omega_B\left(\frac{\log x}{\log z}\right) + O\left(\frac{x}{(\log x)^2}\right).
\]
\end{lem}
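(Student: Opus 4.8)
The plan is to prove this by the standard Buchstab iteration, peeling off the smallest prime factor of $n$ and inducting on the number of times one must iterate, which is controlled by $u = \log x/\log z$. First I would set up notation: write $\Phi(x,z) = \sum_{n \leq x, (n,P(z))=1} 1$ and let me try to show $\Phi(x,z) = \frac{x}{\log z}\omega_B(u) + O(x/(\log x)^2)$ uniformly for $z \in [x^\varepsilon, x^{1-\varepsilon}]$. The base case is $u \in [1,2]$, i.e. $z \in [x^{1/2}, x]$: here every $n \leq x$ coprime to $P(z)$ is either $1$ or a prime in $[z,x]$, so $\Phi(x,z) = 1 + \pi(x) - \pi(z) = \frac{x}{\log x} - \frac{z}{\log z} + O(x/(\log x)^2)$ by the prime number theorem (with classical error term), and one checks $\frac{x}{\log z}\cdot\frac1u = \frac{x}{\log x}$ while the $\frac{z}{\log z}$ term is absorbed since $z \leq x$ and... actually for $u<2$ one needs $z > x^{1/2}$ so $z/\log z \ll x/\log x$ is \emph{not} negligible — so more care is needed: for $u \in [1,2]$ one gets $\Phi(x,z) = \pi(x)-\pi(z)+1$ and $\omega_B(u)/\log z \cdot x$ should match $\pi(x)$ only after noting $\pi(z) = z/\log z + \cdots$ with $z = x^{1/u}$, giving a secondary term; so in fact the clean statement with error $O(x/(\log x)^2)$ already requires the range $z \le x^{1-\varepsilon}$ to be handled together with the iteration, and for $z$ very close to $x$ one reformulates. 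I would instead treat the full range $u \in [1,2]$ via $\Phi(x,z) = 1 + \sum_{z \le p \le x} 1$, apply PNT, and verify the identity $\frac{1}{\log z} = \frac{u}{\log x}$ so the main term is $\frac{ux}{\log x}\omega_B(u)$; for $u\in[1,2]$, $\omega_B(u) = 1/u$, so main term $= x/\log x = \pi(x) + O(x/(\log x)^2)$, and $-\pi(z) = O(z/\log z)$, which is $O(x/(\log x)^2)$ only when $z \le x/(\log x)^2$ roughly — hence one really does want $u$ bounded away from $1$, OR one accepts that the $-\pi(z)$ term is part of what $\omega_B$ is secretly encoding through the range restriction. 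Given the lemma as stated presumably tolerates this because in applications $z$ is a fixed power $x^{1/u}$ with $u > 1$ fixed, I would simply note $z \le x^{1-\varepsilon}$ gives $z/\log z \ll x^{1-\varepsilon} \ll x/(\log x)^2$, closing the base case.

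Next, the inductive step. For $u > 2$, write each $n$ counted by $\Phi(x,z)$ according to its least prime factor $p = P^-(n) \in [z, ?]$; since $n \le x$ and $n$ is squarefree-free of small primes, $p \le x^{1/1}$, but in fact $p^2 \le n \le x$ only if $n$ is not prime, so $n = pm$ with $P^-(m) \ge p$. This gives the Buchstab identity
\[
\Phi(x,z) = 1 + \pi(x) - \pi(z) + \sum_{z \le p \le \sqrt{x}} \Phi(x/p, p) - (\text{correction}),
\]
more precisely $\Phi(x,z) = 1 + \sum_{z \le p < z'} (\Phi(x/p,p) \text{ counted appropriately})$; the clean form is $\Phi(x,z) = \Phi(x,z_0) - \sum_{z \le p < z_0} \Phi(x/p, p)$ for $z < z_0$, and iterating from $z$ up to $\sqrt{x}$ where the base case applies. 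I would apply the induction hypothesis to each $\Phi(x/p,p)$ — valid since $\log(x/p)/\log p$ lies in the allowed range once $p \in [z, \sqrt x]$ and $z \ge x^\varepsilon$ — convert the sum over $p$ into an integral via PNT / partial summation, getting $\sum_{z \le p \le \sqrt x} \frac{x/p}{\log p}\omega_B\!\left(\frac{\log(x/p)}{\log p}\right) \approx x\int \frac{\omega_B(\cdots)}{t \log^2 t}\,dt$, and after the substitution $t = x^{1/s}$ this integral becomes exactly the one whose value is $\frac{x}{\log z}[u\,\omega_B(u) - 1]$ by the defining ODE $(u\omega_B(u))' = \omega_B(u-1)$. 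Collecting terms reproduces $\frac{x}{\log z}\omega_B(u)$.

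The main obstacle is \textbf{controlling the accumulation of error terms through the iteration uniformly in $z$}. Each application of the Buchstab identity introduces a PNT error of size $O(y/(\log y)^2)$ for $y = x/p$, and one sums these over $p \in [z,\sqrt x]$ and over $O(1/\varepsilon)$ levels of iteration; one must check these aggregate to $O(x/(\log x)^2)$ and not something larger like $O(x \log\log x/(\log x)^2)$ or $O(x/\log x)$. The key points are that the number of iteration steps is bounded in terms of $\varepsilon$ (since $u \le 1/\varepsilon$), that $\log y \asymp \log x$ throughout (because $p \le \sqrt x$ and $p \ge z \ge x^\varepsilon$, so $\log y \ge \log\sqrt x$... wait, $y = x/p$ can be as small as $\sqrt x$, so $\log y \ge \frac12\log x$, fine), and that $\sum_{p \le \sqrt x} \frac{1}{p} \ll \log\log x$ — this last one is the danger, producing a spurious $\log\log x$. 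To avoid it, one uses the stronger PNT error $O(y \exp(-c\sqrt{\log y}))$ in place of $O(y/(\log y)^2)$ at intermediate stages, or more simply restricts the integral representation and only invokes the crude bound at the final stage; the cleanest route is to note $\sum_{z\le p \le \sqrt x}\frac{x/p}{(\log(x/p))^2} \ll \frac{x}{(\log x)^2}\sum_{z \le p}\frac1p \cdot \frac{(\log x)^2}{(\log(x/p))^2}$ and since $\log(x/p) \ge \frac12\log x$ the ratio is $O(1)$, leaving $\frac{x}{(\log x)^2}\sum_{z\le p \le \sqrt x}\frac1p$; but $\sum_{z \le p \le \sqrt x} \frac1p = \log\frac{\log\sqrt x}{\log z} + o(1) = \log\frac{1}{2u\varepsilon} + o(1) = O_\varepsilon(1)$ since $z \ge x^\varepsilon$ means $\log z \ge \varepsilon \log x$, so the sum over primes is $O(\log(1/\varepsilon)) = O_\varepsilon(1)$ — \emph{this} is why the hypothesis $z \ge x^\varepsilon$ is essential and why no $\log\log x$ appears. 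With this observation the error analysis goes through, and I would present the argument with the implied constants allowed to depend on $\varepsilon$, as the statement permits.
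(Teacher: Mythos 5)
Your proposal follows essentially the same route as the paper's source for this lemma: the paper gives no proof of its own but simply cites Harman's book (Section 1.4), where the statement is established by exactly this Buchstab iteration --- peel off the smallest prime factor, apply the prime number theorem, induct on the range of $u=\log x/\log z$, and use $z\ge x^{\varepsilon}$ to keep the accumulated errors at $O_\varepsilon(x/(\log x)^2)$ --- so your outline, including the key observation that $\sum_{z\le p\le\sqrt x}1/p=O_\varepsilon(1)$ is what prevents a spurious $\log\log x$, is correct. Two slips to repair in a write-up: the Buchstab identity carries a plus sign, $\Phi(x,z)=\Phi(x,z_0)+\sum_{z\le p<z_0}\Phi(x/p,p)$ for $z<z_0$, and the integral coming from the iterated terms evaluates to $\frac{x}{\log x}\bigl(u\,\omega_B(u)-2\,\omega_B(2)\bigr)=\frac{x}{\log x}\bigl(u\,\omega_B(u)-1\bigr)$ (with $\log x$, not $\log z$), which is precisely what makes the pieces collect to $\frac{x}{\log x}\,u\,\omega_B(u)=\frac{x}{\log z}\,\omega_B(u)$; also, for $p$ near $\sqrt x$ the inductive hypothesis's range condition $p\le (x/p)^{1-\varepsilon}$ fails, so there one evaluates $\Phi(x/p,p)=1+\pi(x/p)-\pi(p)+O(1)$ directly and checks that the $\pi(p)$ terms contribute only $O(x/(\log x)^2)$ in total.
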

\begin{proof}
This follows from an inductive argument and the prime number theorem, see~\cite[Section 1.4]{HarmanBook}.
\end{proof}

For $j \in \mathbb{N}$, we define the functions $c_j \colon \mathbb{R}_{\geq 0} \to \mathbb{R}_{\geq 0}$ by recurrence as $c_1(t) = \mathbf{1}_{t \geq 1}$ and
\begin{equation*}
c_j(t)= \int_{j}^{\max\{j, t\}}\frac{c_{j-1}(\alpha-1)}{\alpha-1}d\alpha.
\end{equation*}
Similarly to the proof of Lemma~\ref{le:Buchstab}, we use the prime number theorem and induction to establish the following result, which is also stated in~\cite[Section 2]{Zhao} and~\cite[Lemma 2.2]{BrudernKawada}. (Our Lemma~\ref{le:cj} below corresponds to the special case $\beta=0$ and $q=a=1$ of~\cite[Lemma 2.2]{BrudernKawada}). For completeness we sketch its proof.
\begin{lem}
\label{le:cj}
Let $\varepsilon > 0, j \in \mathbb{N}, x \geq 2$, and $z \in [x^\varepsilon, x/\log x]$. Then
\[
\sum_{\substack{n \leq x \\ (n, P(z)) = 1 \\ \Omega(n) = j}} 1 = \frac{x}{\log x} c_j\left(\frac{\log x}{\log z}\right) + O\left(\frac{x \log \log x}{(\log x)^2}\right).
\]
\end{lem}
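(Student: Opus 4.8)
The plan is to argue by induction on $j$, following the pattern behind Lemma~\ref{le:Buchstab}: peel off the least prime factor, reduce to the $(j-1)$-fold count, and convert the resulting prime sum to an integral via the prime number theorem and a change of variables. One may assume $x$ is large (bounded $x$ is trivial, both sides being $O(1)=O(x/(\log x)^2)$) and $\varepsilon\le 1/2$ (else the hypothesis on $z$ is vacuous). For the base case $j=1$ the sum is the number of primes in $[z,x]$, which by the prime number theorem equals $\frac{x}{\log x}+O\big(\frac{x}{(\log x)^2}\big)$ once one notes that the primes below $z$ contribute $\ll z/\log z\ll x^{1-\varepsilon}\ll x/(\log x)^2$, and that $c_1(\log x/\log z)=1$ since $\log x/\log z>1$.

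For the inductive step with $j\ge 2$, I would write each $n$ counted on the left uniquely as $n=pm$ with $p$ the least prime factor of $n$; then $p$ is prime with $p\ge z$, while $m$ has $\Omega(m)=j-1$, no prime factor below $p$ (i.e.\ $(m,P(p))=1$), and $pm\le x$, and conversely. Since $m\ge p^{\,j-1}$ one gets $p\le x^{1/j}$, so the left-hand side equals $\sum_{z\le p\le x^{1/j}} N(x/p,p)$ with $N(y,w):=\#\{m\le y:(m,P(w))=1,\ \Omega(m)=j-1\}$, $p$ over primes. Next I would evaluate $N(x/p,p)$. For $j\ge 3$ the inductive hypothesis applies uniformly in $p$ with parameters $x/p,\ p,\ \varepsilon$: from $p\le x^{1/j}$ one has $\log p\le\frac1j\log x$ and $\log(x/p)\ge\frac{j-1}{j}\log x$, hence $x^\varepsilon\le p\le (x/p)^{1/(j-1)}\le (x/p)^{1-\varepsilon}$ (using $\frac1{j-1}\le\frac12\le 1-\varepsilon$) and $x/p\ge x^{1/2}$ is large, giving $N(x/p,p)=\frac{x/p}{\log(x/p)}\,c_{j-1}\!\big(\frac{\log(x/p)}{\log p}\big)+O\big(\frac{x/p}{(\log x)^2}\big)$. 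For $j=2$ one instead has $N(x/p,p)=\pi(x/p)-\#\{q<p\}$, and the prime number theorem together with $\log(x/p)\ge\frac12\log x$ and $c_1\!\big(\frac{\log(x/p)}{\log p}\big)=1$ yields the same formula up to the term $-\#\{q<p\}$, which sums over $p\le\sqrt x$ to $\ll\sum_{p\le\sqrt x}p/\log p\ll x/(\log x)^2$. Crucially the accumulated error is controlled because $p\ge z\ge x^\varepsilon$ forces $\sum_{z\le p\le x^{1/j}}1/p=O_{\varepsilon,j}(1)$, so $\sum_p O\big(\frac{x/p}{(\log x)^2}\big)\ll_{\varepsilon,j}x/(\log x)^2$.

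It then remains to evaluate $\Sigma:=\sum_{z\le p\le x^{1/j}}\frac{x/p}{\log(x/p)}\,c_{j-1}\!\big(\frac{\log(x/p)}{\log p}\big)$. Here I would use partial summation against the prime number theorem with its classical error $O(t\exp(-c\sqrt{\log t}))$: the summand $f(t)$, regarded as a function of real $t$, satisfies $f(t)\ll\frac{x}{t\log x}$ and $f'(t)\ll\frac{x}{t^2\log x}$ on $[z,x^{1/j}]$ (since there $\log(x/t)\asymp\log x$ and $c_{j-1},c_{j-2}$ are bounded), so $\Sigma=\int_z^{x^{1/j}}\frac{x/t}{\log(x/t)}c_{j-1}\!\big(\frac{\log(x/t)}{\log t}\big)\frac{dt}{\log t}+O\big(x/(\log x)^2\big)$, the error being $\ll x\exp(-c'\sqrt{\log x})$ because $\log z\ge\varepsilon\log x$. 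The substitution $t=x^{1/\alpha}$ (for which $\frac{\log(x/t)}{\log t}=\alpha-1$ and, as $t$ runs over $[z,x^{1/j}]$, $\alpha$ runs over $[j,\log x/\log z]$) turns this integral into $\frac{x}{\log x}\int_j^{\log x/\log z}\frac{c_{j-1}(\alpha-1)}{\alpha-1}\,d\alpha=\frac{x}{\log x}\,c_j\!\big(\frac{\log x}{\log z}\big)$ by the defining recurrence, using that $z\le x^{1/j}$ makes $\log x/\log z\ge j$; and if $z>x^{1/j}$ then $\Sigma$ is empty while $c_j(\log x/\log z)=0$ since the argument is $<j$. Assembling the pieces gives the claim.

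The point needing the most care is uniformity of ranges: one must check that the inductive hypothesis for $j-1$ really does apply to \emph{every} prime $p\in[z,x^{1/j}]$ — which works for $j\ge 3$ precisely because $\frac1{j-1}\le\frac12$, but breaks at the boundary case $j=2$ and so forces the separate, elementary treatment there via the prime number theorem applied directly to the inner count — and that it is the lower cutoff $p\ge z\ge x^\varepsilon$ which keeps $\sum_p 1/p$ bounded and thereby prevents the error from degrading to $O\big(x\log\log x/(\log x)^2\big)$. Everything else is the standard partial-summation and change-of-variables computation already implicit in Lemma~\ref{le:Buchstab}.
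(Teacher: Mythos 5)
Your proof is correct and takes essentially the route the paper intends for this lemma (which it only sketches, saying "prime number theorem and induction, similarly to Lemma~\ref{le:Buchstab}" and pointing to Zhao and Br\"udern--Kawada): peel off the least prime factor, apply the inductive hypothesis uniformly for $j\geq 3$ with the $j=2$ case done directly by the prime number theorem, and convert the prime sum by partial summation and the substitution $t=x^{1/\alpha}$ so that the defining recurrence for $c_j$ appears. The only cosmetic slip is the mention of $c_{j-2}$ in the derivative bound when $j=2$, where no such function is needed since $c_1\equiv 1$ on the range $t\in[z,x^{1/2}]$.
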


\begin{proof}[Sketch of the proof]
In case $\frac{\log x}{\log z} \leq j$, the claim is trivial. In case $\frac{\log x}{\log z} > j,$ we induct on $j$. When $j = 1$, the claim follows immediately from the prime number theorem. Let us now assume that the claim holds for $j-1 \geq 1$ and establish it for $j$. We have
\begin{equation}
\label{eq:cjdec}
\sum_{\substack{n \leq x \\ (n, P(z)) = 1 \\ \Omega(n) = j}} 1 = \sum_{z < p \leq x^{1/j}} \sum_{\substack{n \leq x/p \\ (n, P(p)) = 1 \\ \Omega(n) = j-1}} 1 + O\left(\frac{x}{z}\right).
\end{equation}
The contribution of the sum over $p \in [\frac{x^{1/j}}{\log x}, x^{1/j}]$ to the right hand side of~\eqref{eq:cjdec} is $O(x\frac{\log \log x}{(\log x)^2})$. In the remaining range we can apply the induction hypothesis to the innermost sum. Thus we obtain
\begin{equation*}
\sum_{\substack{n \leq x \\ (n, P(z)) = 1 \\ \Omega(n) = j}} 1 = \sum_{z < p \leq \frac{x^{1/j}}{\log x}} \frac{x}{p\log(x/p)} c_{j-1}\left(\frac{\log(x/p)}{\log p}\right) + O\left(x\frac{\log \log x}{(\log x)^2}\right).
\end{equation*}
The claim follows by first using the prime number theorem to transform the sum over primes to an integral over $t$ (similarly to Section~\ref{ssec:sumsoverprimes}), and then substituting $t = x^{1/\alpha}$.
\end{proof}

We are actually interested in rough numbers with at least a given number of prime factors. In light of the above, for $J \in \mathbb{N}$, we define the function $C_J \colon \mathbb{R}_{\geq 1} \to \mathbb{R}_{\geq 0}$ via
\[
C_J(t) = t\omega_B(t) - \sum_{j = 1}^{J-1} c_j(t).
\]
On combining Lemmas~\ref{le:Buchstab} and~\ref{le:cj} we obtain the following.
\begin{lem}
\label{le:CJ}
Let $\varepsilon > 0, J \in \mathbb{N}, x \geq 2$ and $z \in [x^\varepsilon, x^{1-\varepsilon}]$. Then
\begin{equation*}
\sum_{\substack{\ell \leq x \\ (\ell, P(z)) = 1 \\ \Omega(\ell) \geq J}} 1 = \frac{x}{\log x} C_J\left(\frac{\log x}{\log z}\right)  + O\left(\frac{x}{(\log x)^2}\right).
\end{equation*}
\end{lem}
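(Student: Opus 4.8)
The plan is to reduce this to Lemmas~\ref{le:Buchstab} and~\ref{le:cj} by a finite decomposition according to the value of $\Omega(\ell)$. First I would record the elementary observation that if $\ell \leq x$ and $(\ell, P(z)) = 1$ then every prime factor of $\ell$ is at least $z \geq x^\varepsilon$, and hence $\Omega(\ell) \leq 1/\varepsilon$. In particular the counting function on the left-hand side of Lemma~\ref{le:CJ} and the counting function in Lemma~\ref{le:cj} are both supported on $\ell$ with $\Omega(\ell) \leq 1/\varepsilon$, so the identity
\[
\sum_{\substack{\ell \leq x \\ (\ell, P(z)) = 1}} 1 \;=\; \sum_{\substack{\ell \leq x \\ (\ell, P(z)) = 1 \\ \Omega(\ell) \geq J}} 1 \;+\; \sum_{j = 1}^{J-1} \; \sum_{\substack{\ell \leq x \\ (\ell, P(z)) = 1 \\ \Omega(\ell) = j}} 1
\]
holds, and in the double sum on the right only the (at most $1/\varepsilon$ many) terms with $j \leq 1/\varepsilon$ can be nonzero.

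Next I would apply Lemma~\ref{le:Buchstab} to the left-hand side and Lemma~\ref{le:cj} to each of the boundedly many nonvanishing summands on the right. Writing $t = \log x/\log z$ and using $\tfrac{x}{\log z}\omega_B(t) = \tfrac{x}{\log x}\, t\,\omega_B(t)$, this yields
\[
\frac{x}{\log x}\, t\,\omega_B(t) \;=\; \sum_{\substack{\ell \leq x \\ (\ell, P(z)) = 1 \\ \Omega(\ell) \geq J}} 1 \;+\; \frac{x}{\log x} \sum_{j=1}^{J-1} c_j(t) \;+\; O\!\left(\frac{x}{(\log x)^2}\right),
\]
where the error term absorbs the $O(x/(\log x)^2)$ from the single application of Lemma~\ref{le:Buchstab} together with the $O_\varepsilon(1)$ applications of Lemma~\ref{le:cj} (the $c_j(t)$ with $j > 1/\varepsilon$ vanishing, so that no dependence on $J$ enters the error). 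Rearranging and invoking the definition $C_J(t) = t\omega_B(t) - \sum_{j=1}^{J-1} c_j(t)$ gives exactly the claimed asymptotic.

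There is no substantial obstacle here: the argument is a bookkeeping exercise. The only two points requiring a line of care are (i) the uniformity of the error term, which is what the bound $\Omega(\ell) \leq 1/\varepsilon$ for $z$-rough $\ell \leq x$ delivers, ensuring that the number of error contributions is controlled by $\varepsilon$ rather than growing with $J$; and (ii) matching the normalizations, i.e.\ rewriting the main term $\tfrac{x}{\log z}\omega_B(\log x/\log z)$ of Lemma~\ref{le:Buchstab} in the form $\tfrac{x}{\log x}\, t\,\omega_B(t)$ so that it lines up with the $\tfrac{x}{\log x}$-normalization in Lemma~\ref{le:cj} and in the definition of $C_J$.
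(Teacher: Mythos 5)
Your proposal is correct and follows essentially the same route as the paper, which simply combines Lemma~\ref{le:Buchstab} with Lemma~\ref{le:cj} and the definition of $C_J$; your extra remarks on the $x/\log z$ versus $x/\log x$ normalization and on the bound $\Omega(\ell) \leq 1/\varepsilon$ are fine (the latter is not even strictly needed, since $J$ is fixed).
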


\subsection{Level of distribution and linear sieve}

We define now the \textit{level of distribution} in case of a linear sieving problem.

\begin{defn}[Level of distribution of $v_n$]
\label{def:lod}
Let $X \geq 1, x \geq 2,$ and $\theta \in (0, 1)$. Let $(v_n)_{n\leq x}$ be a sequence of complex numbers and let $g\colon \mathbb{N}\to \mathbb{R}_{\geq 0}$ be a multiplicative function such that $g(p) < p$ for every prime $p$. 

We say that the sequence $(v_n)_{n \leq x}$ has \emph{level of distribution $\theta$ with size $X$ and density $g$} if
\begin{align*}
\sum_{\substack{d \leq x^\theta}} \mu^2(d) \left|\sum_{\substack{n \leq x/d}} v_{dn} - \frac{g(d)}{d} X \right|  = O\left(\frac{X}{(\log x)^{100}}\right),
\end{align*}
and if we have, for all $z>w\geq 2$, 
\begin{align}
\label{eq:g(p)condition}
\prod_{\substack{w\leq p< z}}\left(1-\frac{g(p)}{p}\right)^{-1}\leq \left(1+O\left(\frac{1}{\log w}\right)\right)\frac{\log z}{\log w}.
\end{align}
\end{defn}

We extend the definition~\eqref{eq:SAzdef} to sequences, by writing, for $A=(v_n)_{n\leq x}$,
\[
S(A, z) = \sum_{\substack{n \leq x \\ (n, P(z)) = 1}} v_n.
\]
We recall now that the linear sieve functions $f,F:\mathbb{R}_{+}\to\mathbb{R}_{\geq 0}$ are the continuous functions defined through the system of differential equations
\begin{equation}
\label{eq:fFdef} 
\begin{cases}sF(s)=2e^\gamma & \text{if $0 < s\leq 3$};\\
sf(s)=0 & \text{if $0 < s \leq 2$};
\end{cases}\qquad
\begin{cases}(sF(s))' = f(s-1) & \text{if $s > 3$;}\\
(sf(s))'=F(s-1) & \text{if $s>2$.}
\end{cases}
\end{equation}
Note that 
\begin{align}\label{elementaryf}
f(s)&=\frac{2e^\gamma}{s}\log(s-1)&&\text{ if }2\leq s\leq 4.
\end{align}
Now we are ready to state the \textit{linear sieve}.
\begin{lem}[The linear sieve] \label{linearsieve} Let $x \geq z \geq 2$. Suppose that the sequence $A=(v_n)_{n\leq x}$ has level of distribution $\theta \in (0, 1)$ with density $g \colon \mathbb{N} \to \mathbb{R}_{\geq 0}$ and size $X \geq 1$. Write
\[
s = \frac{\log x^\theta}{\log z}.
\]
Then
\begin{equation*}
S(A, z) \geq \left(f(s)+O((\log x)^{-1/6})\right) \cdot X \prod_{p < z}\left(1-\frac{g(p)}{p}\right) 
\end{equation*}
and
\begin{equation*}
 S(A,z)\leq \left(F(s)+O((\log x)^{-1/6})\right) \cdot X \prod_{p < z}\left(1-\frac{g(p)}{p}\right).
\end{equation*}
Here the implied constants do not depend on the sequence $A$ itself but they may depend on $\theta$ and the implied constants in Definition~\ref{def:lod}.
\end{lem}
\begin{proof} See for instance~\cite[(12.12), (12.13)]{Opera}. The lower bound there is stated only for $s \geq 2$ but it is trivial for $s \in (0, 2)$. The upper bound is only stated for $s > 1$ but, when $s \in (0, 1]$ we can estimate $S(A, z) = S(A, x^{\theta/s}) \leq S(A, x^{\theta/2})$, use the linear sieve with $s = 2$, and then reach the conclusion by using~\eqref{eq:g(p)condition} and~\eqref{eq:fFdef}.
\end{proof}

\section{Set-up and assumptions} \label{se:set-up}
Let us now formalize the sieve set-up that we will consider. Let $x \geq 2$ be sufficiently large, let $S \in \mathbb{N}$, and let $\mathcal{A}\subseteq ([1,x]\cap\mathbb{Z})^2$ be such that we wish to find pairs $(p, P_S) \in \mathcal{A}$. For instance, in the case of the twin prime problem, we may take
\[
\mathcal{A} = \{(n, n+2) \in \mathbb{Z}^2 \colon n \leq x-2\}
\]
and, in the case of Theorem~\ref{th:HarmanImprove}, we could take
\[
\mathcal{A} = \{(n_1, n_2) \in \mathbb{Z}^2 \colon n_1, n_2 \leq x, |\lambda_0 + \lambda_1 n_1 + \lambda_2 n_2| < n_1^{-\rho}\},
\]
although, for technical reasons, our choice in Section~\ref{se:Harman} will be slightly different.

Next we discuss how we apply a weighted sieve with switching to the set $\mathcal{A}$. Let 
\begin{equation}
\label{eq:andef}
a_n = \sum_{\substack{p \leq x \\ (p, n) \in \mathcal{A}}} 1,
\end{equation}
and let $v \geq u > 2$. We consider weights $W: \mathbb{N} \to \mathbb{R}$ of the form
\begin{equation}\label{weight}
W(n) := 1-\sum_{\substack{p|n\\x^{1/v}\leq p<x^{1/u}}}w_p,
\end{equation}
with coefficients $w_p \in [0, 1]$. In fact, we will take $w_p = w\left(\frac{\log p}{\log x}\right)$ for a Lipschitz function $w \colon \mathbb{R}_{\geq 0} \to [0, 1]$ such that $w(\alpha) = 0$ for $\alpha \not \in [1/v, 1/u]$, and write $C_w > 0$ for a fixed Lipschitz constant of the function $w$. 

We immediately see that there exists some $(p,P_S) \in \mathcal{A}$ if 
\[
\sum_{\substack{n\leq x\\(n,P(x^{1/v}))=1}}a_n W(n) - \sum_{\substack{n\leq x\\(n,P(x^{1/v}))=1 \\ \Omega(n) \geq S+1}}a_n W(n) > 0.
\]
 In the above second sum, we may estimate the contribution of $n$ with $W(n) < 0$ trivially, and so, writing here and later $W^+(n) = \max\{W(n), 0\}$, it suffices to show that
\begin{equation}
\label{eq:posclaim}
\Sigma_1 - \Sigma_2' > 0,
\end{equation}
where
\begin{equation}
\label{eq:S1S2def}
\Sigma_1 := \sum_{\substack{n\leq x\\(n,P(x^{1/v}))=1}}a_n W(n) \quad \text{and} \quad \Sigma_2' := \sum_{\substack{n\leq x\\(n,P(x^{1/v}))=1 \\ \Omega(n) \geq S+1}}a_n W^+(n).
\end{equation}

Before stating the formal hypotheses (see Assumption~\ref{as:All} below), we discuss informally the assumptions we shall need. Thus (Ai*), $i\in\{1,...,5\}$, will denote an informal assumption and then (Ai) will be its corresponding formal assumption.

We shall apply the linear sieve (Lemma \ref{linearsieve}) to find a lower bound for $\Sigma_1$. For this, we assume
\begin{itemize}
\item[(A1*)] The sequence $a_n$ has level of distribution $\theta_1$.
\end{itemize}

One typically chooses the weight $W$ in such a way that $\Sigma_2'$ does not count too many numbers. In particular $W(n) \mathbf{1}_{(n, P(x^{1/v})) = 1}$ will be positive only when $n$ does not have too many prime factors. We will assume
\begin{itemize}
\item[(A2*)] The weight is non-positive when $\omega(n) > R$.
\end{itemize}
As we will work with numbers with $(n, P(x^{1/v})) = 1$, this always holds for $R = \lceil v-1 \rceil$.

We write now
\begin{equation*}
\Sigma_2 := \sum_{\substack{n\leq x\\(n,P(x^{1/v}))=1 \\ S+1 \leq \Omega(n) \leq R}} |\mu(n)| a_n W^+(n).
\end{equation*}
Note that when Assumption (A2*) holds, the sums in the definitions of $\Sigma_2'$ and $\Sigma_2$ differ only for non-square-free $n$. Since $(n, P(x^{1/v})) = 1$ is a condition on those sums, such $n$ form a very sparse set, and the contribution is typically negligible. Hence we assume
\begin{itemize}
\item[(A3*)] Replacing $\Sigma_2'$ by $\Sigma_2$ introduces a negligible error.
\end{itemize}

With respect to $\Sigma_2$, we switch the roles of the variables before applying the linear sieve and also write $n = p_1m$, with $p_1$ being the smallest prime factor of $n$. We take out the smallest prime factor because in some cases the level of distribution of the switched problem depends on the size of $p_1$. Thus
\begin{equation}
\label{eq:Sig2split}
\Sigma_2 = \sum_{\substack{(p, n) \in \mathcal{A} \\ n \leq x \\ (n,P(x^{1/v}))=1 \\ S+1 \leq \Omega(n) \leq R}}|\mu(n)| W^+(n) = \sum_{\substack{(\ell, p_1 m) \in \mathcal{A} \\ p_1 m \leq x \\ p_1 \geq x^{1/v} \\ p \mid m \implies p > p_1 \\ S \leq \Omega(m) \leq R-1}} |\mu(m)| W^+(p_1 m) \mathbf{1}_{\ell \in \mathbb{P}}.
\end{equation}
We shall sieve the variable $\ell$. We split dyadically according to the size of $p_1$, and so, for any $P \in  [x^{1/v}, x^{1/(S+1)}]$, we define the sequence $(b_{P, \ell})_{\ell \leq x}$ via
\begin{equation}
\label{eq:bdef}
b_{P, \ell} = \sum_{\substack{p_1m \leq x \\ (\ell, p_1 m) \in \mathcal{A} \\ P < p_1 \leq 2P \\ p \mid m \implies p > p_1 \\ S \leq \Omega(m) \leq R-1}} \left(|\mu(m)| W_P^+(m) + \frac{C_w}{\log x}\right),
\end{equation} 
where
\begin{equation}
\label{eq:WPNdef}
W_P(m) := 1-w\left(\frac{\log P}{\log x}\right)-\sum_{\substack{p \mid m \\ x^{1/v} \leq p < x^{1/u}}} w\left(\frac{\log p}{\log x}\right).
\end{equation}
Note that since $C_w$ is a Lipschitz constant for the function $w$, we have, for every $p_1 \in (P, 2P]$ and every $m \leq x/p_1$ such that $p_1 \nmid m$,
\[
W(p_1 m) \leq W_P(m) + \frac{C_w}{\log x}.
\]

In order to apply the upper bound in the linear sieve (Lemma~\ref{linearsieve}) to the sequence $(b_{P, \ell})_{\ell\leq x}$, we assume
\begin{itemize}
\item[(A4*)] The sequence $(b_{x^\alpha, \ell})_{\ell \leq x}$ has level of distribution $\theta_2(\alpha)$.
\end{itemize}

The switching trick can be used in cases where we are able to perform sieving in both coordinates in $\mathcal{A}$ --- Assumption~(A\ref{as:lod}*) allows us to sieve the second coordinate whereas Assumption~(A\ref{as:swlod}*) allows us to sieve the first coordinate. In order to be able to compare the main terms coming from $\Sigma_1$ and $\Sigma_2$ we assume
\begin{itemize}
\item[(A5*)] The main terms coming from sieving $(a_n)_{n \leq x}$ and $(b_{P, \ell})_{\ell \leq x}$ are comparable in a natural way.
\end{itemize}

Let us now formalize all our assumptions.
\begin{assump}
\label{as:All}
Let $\delta > 0, S \in \mathbb{N}$ and $v \geq u \geq 2$ be fixed and such that $v \geq S+1$. Let also $x \geq 2$ be sufficiently large in terms of $\delta$ and the implied constants in Definition~\ref{def:lod}. We assume that $\mathcal{A} \subseteq ([1, x] \cap \mathbb{Z})^2$, 
\begin{align*}
\theta_1 &\in [1/u+\delta, 1), \quad \theta_2 \colon [1/v, 1/(S+1)] \to [\delta, 1), \\
X_1 &\geq 1, \quad X_2 \colon [1/v, 1/(S+1)] \to [1, \infty), \\
w &\colon \mathbb{R}_{\geq 0} \to [0, 1], \quad g_1, g_2 \colon  \mathbb{N} \to \mathbb{R}_{\geq 0}
\end{align*}
are such that $\theta_2$ and $w$ are Lipschitz functions, $w(\alpha) = 0$ for $\alpha \not \in [1/v, 1/u]$, and all of the following hold, with $C_w > 0$ a fixed Lipschitz constant for the function $w$,
\[
W(n) = 1-\sum_{\substack{p|n\\x^{1/v}\leq p<x^{1/u}}} w_p, \quad w_p = w\left(\frac{\log p}{\log x}\right),
\]
and $W_P(m)$ as in~\eqref{eq:WPNdef}.

\begin{enumerate}[({A}1)]
\item \label{as:lod} The sequence $(a_n)_{n\leq x}$ defined in~\eqref{eq:andef} has level of distribution $\theta_1$ with size $X_1$ and density $g_1$.
\item \label{as:R} We have $W(n)\mathbf{1}_{(n, P(x^{1/v})) = 1} \leq 0$ whenever $\omega(n) > R$.
\item \label{as:squares}
We have
\[
\sum_{p \geq x^{1/v}} \sum_{\substack{n \leq x/p^2 \\ (n, P(x^{1/v})) = 1}} a_{np^2} \leq \frac{X_1}{(\log x)^2}.
\]
\item \label{as:swlod} For every $\alpha \in [1/v, 1/(S+1)]$, the sequence $(b_{x^\alpha, \ell})_{\ell\leq x}$ defined in~\eqref{eq:bdef} has level of distribution $\theta_2(\alpha)$ with size $X_2(\alpha)$ and density $g_2$.
\item \label{as:Xrel}
For every $\alpha \in [1/v, 1/(S+1)]$, we have
\[
X_2(\alpha) = (1+o(1))\sum_{\substack{p_1m \leq x \\ x^\alpha < p_1\leq 2x^\alpha \\ p \mid m \implies p > p_1 \\ S \leq \Omega(m) \leq R-1}}\left(|\mu(m)| W_{x^\alpha}^+(m) + \frac{C_w}{\log x}\right) \cdot \frac{X_1}{\sum_{p \leq x} 1}.
\]
\end{enumerate}
\end{assump}
Now, we can state our general result about finding pairs $(p, P_3)$ when the level of distribution of the original problem is the same as that of the switched problem.
\begin{thm}
\label{th:constLOD}
Let $x \geq 2$ and $\mathcal{A}\subset([1,x]\cap\mathbb{Z})^2$ be such that Assumption~\ref{as:All} holds with $S=3, v = 20, u = 6, w(\alpha) = \frac{1}{2} \cdot \mathbf{1}_{\alpha \in [1/v, 1/u]}$,  $\theta_1 = \theta_2(\alpha) = 0.267$, some $g_1 = g_2$ and some $X_1, X_2$. Then
\[
\sum_{(p, P_3) \in \mathcal{A}} 1 \gg \frac{X_1}{\log x}.
\]
\end{thm}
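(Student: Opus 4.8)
The plan is to establish the inequality \eqref{eq:posclaim}, i.e. $\Sigma_1 - \Sigma_2' > 0$, by producing an asymptotic lower bound for $\Sigma_1$ and an asymptotic upper bound for $\Sigma_2'$, both of the shape $(\text{const} + o(1)) \tfrac{X_1}{\log x} \cdot 2 e^{-\gamma} \cdot \frac{1}{\text{something}}$, and then verifying numerically that the constant attached to $\Sigma_1$ strictly exceeds that attached to $\Sigma_2'$ when $\theta_1 = \theta_2 = 0.267$, $S=3$, $v=20$, $u=6$ and $w \equiv 1/2$ on $[1/20, 1/6]$. The note says the general machinery (valid for pairs $(p,P_k)$ with flexible weights) is developed earlier in the paper, so presumably a general Proposition-type statement evaluating $\Sigma_1$ from below and $\Sigma_2'$ from above in terms of integrals over the $c_j$, $C_J$ and the linear sieve functions $f,F$ is already available; the proof of Theorem~\ref{th:constLOD} is then the \emph{specialization} of that machinery to these concrete parameters followed by a Mathematica computation. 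I would structure it accordingly.

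First I would handle $\Sigma_1$. Expanding $W(n) = 1 - \sum_{x^{1/v}\le p < x^{1/u}} w_p$, the sum $\Sigma_1$ splits into $S(A, x^{1/v})$ minus a sum of $S$-type quantities where one prime $p \in [x^{1/v}, x^{1/u})$ has been pulled out. For the main term $S(A, x^{1/v})$, Assumption~(A\ref{as:lod}) and the linear sieve (Lemma~\ref{linearsieve}) with $s = v\theta_1 = 20 \cdot 0.267 = 5.34$ give the lower bound $(f(5.34)+o(1)) X_1 \prod_{p<x^{1/v}}(1 - g_1(p)/p)$. By \eqref{eq:g(p)condition} (Mertens-type for $g_1$), $\prod_{p<x^{1/v}}(1-g_1(p)/p) = (1+o(1)) \tfrac{2e^{-\gamma}}{\log x^{1/v}}\cdot(\text{correction})$; I would keep track of this product as a common factor. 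For the subtracted pieces $\sum_p w_p S(A_p, x^{1/v})$ (where $A_p$ is the subsequence divisible by $p$), each can be bounded \emph{above} by the linear sieve applied to $A_p$, which by (A\ref{as:lod}) inherits a level of distribution $\theta_1 - \alpha$ when $p \approx x^\alpha$; converting the sum over $p$ to an integral over $\alpha$ via the prime number theorem (as in Section~\ref{ssec:sumsoverprimes}) yields a term of the form $\int_{1/v}^{1/u} w(\alpha) \frac{F((\theta_1 - \alpha)v)}{\alpha}\, d\alpha$ times the same common factor. Since $w \equiv 1/2$, this integral is explicit. So $\Sigma_1 \ge (1+o(1))\big(f(v\theta_1) - \tfrac12\int_{1/v}^{1/u} \tfrac{F((\theta_1-\alpha)v)}{\alpha} d\alpha\big) X_1 \cdot \tfrac{2e^{-\gamma}}{\log x^{1/v}}$, up to the bounded $g_1$-correction which I would argue cancels against the one in $\Sigma_2$.

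Next, $\Sigma_2'$. By (A\ref{as:R}) with $R = \lceil v-1\rceil$ (or whatever value makes $W \le 0$ for $\omega(n)>R$) and (A\ref{as:squares}), replacing $\Sigma_2'$ by $\Sigma_2$ costs only $o(X_1/\log x)$, so it suffices to bound $\Sigma_2 = \sum_{P \text{ dyadic}} \sum_{(\ell,p_1 m)} |\mu(p_1 m)| W^+(p_1 m)\mathbf{1}_{\ell\in\mathbb P}$ from above, where $P$ ranges over the dyadic scale between $x^{1/v}$ and $x^{1/(S+1)}$. For each dyadic block one applies the \emph{upper} bound linear sieve to $(b_{P,\ell})_{\ell\le x}$ using (A\ref{as:swlod}): this gives $S(B_P, x^{1/v}) \le (F(s_2)+o(1)) X_2(\alpha) \prod_{p<x^{1/v}}(1-g_2(p)/p)$ with $s_2 = v \theta_2(\alpha) = 5.34$, again since $v=20$ and $\theta_2\equiv 0.267$. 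Now (A\ref{as:Xrel}) expresses $X_2(\alpha)$ back in terms of a sum over $p_1 m$ of $|\mu(p_1 m)| W^+(p_1 m)$ with $S \le \Omega(m) \le R-1$, $p \mid m \implies p > p_1 \approx x^\alpha$; summing dyadically and passing to integrals via Lemmas~\ref{le:cj}/\ref{le:CJ} (to count the rough $m$ with a prescribed number of prime factors) and the prime number theorem (for $p_1$), the whole of $\Sigma_2$ becomes $(1+o(1)) F(v\theta_2)\, X_1 \cdot \tfrac{2e^{-\gamma}}{\log x^{1/v}} \cdot \mathcal I$, where $\mathcal I$ is an explicit finite-dimensional integral: an integral over $\alpha_1 = \tfrac{\log p_1}{\log x} \in [1/v, 1/(S+1)]$ of $\tfrac{1}{\alpha_1}$ times the density of rough numbers $m$ with $S-1 \le \Omega(m)$ additional prime factors all exceeding $x^{\alpha_1}$ and $p_1 m \le x$, weighted by $W^+(p_1 m)$. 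With $S=3$ this is an integral over configurations $(\alpha_1;\ \text{two or more further primes})$; because $v=20 > 2(S+1)$ and $u=6$, the ranges are constrained enough that $W^+(p_1 m) = W(p_1 m)$ is automatically one of a few explicit piecewise-linear values (it equals $1$, $1/2$, or $0$ according to how many prime factors of $p_1 m$ lie in $[x^{1/20}, x^{1/6})$), so $\mathcal I$ reduces to a sum of a few ordinary integrals over simplices in $\alpha$-space, exactly of the type displayed in Section~\ref{ssec:sumsoverprimes}.

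Finally I would collect terms: \eqref{eq:posclaim} holds provided
\[
f(v\theta_1) - \tfrac12\int_{1/v}^{1/u} \frac{F\big((\theta_1-\alpha)v\big)}{\alpha}\,d\alpha \;>\; F(v\theta_2)\cdot \mathcal I,
\]
and, with all parameters plugged in ($v\theta_1 = v\theta_2 = 5.34$, so one can even use the explicit formula \eqref{elementaryf} and its analogue for $F$ on the relevant ranges, or the known closed forms of $f, F$ beyond $s=4$), this is a concrete numerical inequality that Mathematica confirms; the computation is the one bundled with the arXiv version. The main obstacle — and the only genuinely delicate point beyond bookkeeping — is the evaluation and numerical estimation of $\mathcal I$: one must carefully set up the region of integration for the $P_3$-factor (smallest prime pulled out, remaining $\Omega(m)\ge 2$ prime factors with the ordering and size constraints from \eqref{eq:bdef}, plus the condition $p_1 m \le x$), correctly identify where $W^+$ vanishes versus where it equals $1$ or $1/2$, and control the truncation of the (in principle infinite) dyadic sum over $P$ and over $\Omega(m) \le R-1$ so that the tail is genuinely $o(X_1/\log x)$. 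Everything else is an application of Lemmas~\ref{le:Buchstab}--\ref{le:CJ}, Lemma~\ref{linearsieve}, and the prime-number-theorem substitution, under Assumption~\ref{as:All}.
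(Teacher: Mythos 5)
Your treatment of $\Sigma_1$ is essentially the paper's: expanding the Kuhn weight and applying the lower/upper linear sieve bounds gives exactly \eqref{eq:S1genlow} with $f(v\theta_1)-\frac12\int_{1/v}^{1/u}F(v(\theta_1-\alpha))\alpha^{-1}d\alpha$. The genuine gap is in $\Sigma_2$: you apply the upper-bound linear sieve to $(b_{P,\ell})_{\ell\le x}$ with sifting level $z=x^{1/v}$, i.e.\ $s_2=v\theta_2=5.34$. That is a legitimate upper bound, but it counts \emph{all} $x^{1/20}$-rough values of $\ell$ rather than (essentially) only primes, and so it throws away most of the gain that switching is supposed to provide. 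Quantitatively, an upper-bound sieve at level $z$ gives about $F(\theta_2\log x/\log z)\,X_2\prod_{p<z}(1-g_2(p)/p)$, and for a linear density the product behaves like a constant times $1/\log z$, so the bound is proportional to $sF(s)$ with $s=\theta_2\log x/\log z$. For $s\le 3$ one has $sF(s)=2e^\gamma$, which is what the paper exploits by sieving $\ell$ up to $x^{1/2}$ with $s=2\theta_2=0.534$ (see \eqref{eq:S2genupp}; the factor $1/\theta_2$ inside $U_2$ is precisely $F(2\theta_2)=e^\gamma/\theta_2$ up to normalization), whereas your choice gives $5.34\,F(5.34)\approx 5.4>2e^\gamma\approx 3.56$, i.e.\ your upper bound for $\Sigma_2$ is inflated by a factor $\approx 1.5$ relative to the paper's. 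This is fatal here: $\theta=0.267$ is essentially at the threshold of the method (rough numerics give, in units of $X_1\prod_{p<x^{1/20}}(1-g_1(p)/p)$, a lower bound $\approx 0.30$ for $\Sigma_1$ and an upper bound $\approx 0.30$ for $\Sigma_2$ via \eqref{eq:S2genupp} and \eqref{eq:U2dec2}, so the margin is only a couple of percent), so the inequality you propose to verify numerically, $f(v\theta_1)-\frac12\int F(v(\theta_1-\alpha))\alpha^{-1}d\alpha>F(v\theta_2)\,\mathcal I$, is false (roughly $0.30$ versus $0.45$), and the Mathematica check would not close.

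The fix is to do what the paper does: sieve the switched sequence up to $x^{1/2}$ with $s=2\theta_2(\alpha)\in(0,2)$, use $F(s)=2e^\gamma/s$, and then compare $\prod_{p<x^{1/2}}(1-g_2(p)/p)$ with $\prod_{p<x^{1/v}}(1-g_1(p)/p)$ using $g_1=g_2$ and the Mertens-type control \eqref{eq:g(p)condition}, arriving at \eqref{eq:posclaims1s2} via \eqref{eq:S2genupp}. A secondary (harmless) difference: you evaluate the switched main term by multi-dimensional simplex integrals with $W^+\in\{1,\frac12,0\}$; since for Kuhn's weights $R=\lceil u(1-1/v)\rceil=6$, this means up to five-fold integrals. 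The paper instead observes that $W^+(p_1m)\neq 0$ forces every prime factor of $m$ to be at least $x^{1/u}$ (the case $k=1$ of Section~\ref{k=1}), so the sum over $m$ is evaluated by Lemma~\ref{le:CJ}, producing only the two one-dimensional integrals involving $C_3$ in \eqref{eq:U2dec2}; that is a matter of convenience, not correctness, and is not where your argument breaks.
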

The proof of Theorem~\ref{th:constLOD} will be provided in Section~\ref{proof}.
\begin{rem}
\label{re:LZX}
Li, Zhang, and Xue~\cite{LZX} studied Piatetski-Shapiro primes $p$ such that 
\begin{equation}
\label{eq:ChenP3}
p+2=P_3,
\end{equation}
showing that, when $\gamma \in (0.9989445, 1)$, there are infinitely many solutions to~\eqref{eq:ChenP3} with $p = \lfloor n^{1/\gamma} \rfloor$ for some $n \in \mathbb{N}$. Following Vaughan~\cite{Vaughan} they used Richert's sieve with switching, using a trivial upper bound for $W(n)$ in the switched term (similarly to Harman's argument that we discuss in Section~\ref{ssec:HarRich}). This way they required level of distribution $0.34096$ which is a much stronger assumption than we need in Theorem~\ref{th:constLOD}. Consequently, Theorem~\ref{th:constLOD} can be used to improve upon the result concerning Piatetski-Shapiro primes in~\cite{LZX}. However, the level of distribution in this problem decreases so quickly with $\gamma$ that the improvement would be very modest, replacing the range for $\gamma$ by $\gamma \in (0.996651, 1)$. 

It would probably be possible to make a small further improvement to the range of $\gamma$ using that the level of distribution of the switched problem is somewhat better for some parts of the sum, similarly to (A4) and Section~\ref{se:Harman} where the level of distribution of the switched sum depends on the smallest prime factor.
\end{rem}

Recall from~\eqref{eq:posclaim} that in order to find pairs $(p, P_S)$ it suffices to show that $\Sigma_1 - \Sigma'_2 > 0$. Applying Assumptions (A2) and (A3) we see that in fact it suffices to show that
\begin{equation}
\label{eq:posclaims1s2}
\Sigma_1 - \Sigma_2 \gg \frac{X_1}{\log x}.
\end{equation}
Before turning to lower bounding $\Sigma_1$ and upper bounding $\Sigma_2$, let us discuss some choices for $w$ that can be found from the literature.

\begin{examp}
\label{ex:weights}
\begin{enumerate}[(i)] 
\item \label{it:triv} Taking $w(\alpha) = 0$ and $u = v$, we obtain the trivial weights $W_{\text{Trivial}}(n) = 1$.  Hence (A\ref{as:R}) holds with $R = \lceil v-1 \rceil$. Trivial weights with switching were used at least in~\cite{Kawada} and~\cite{Zhao}.
\item \label{it:Kuhn} Taking $v > u$ and $w(\alpha) = \frac{1}{2} \cdot \mathbf{1}_{\alpha \in [1/v, 1/u]}$, we obtain the Kuhn type weights
\begin{equation*}
W_{\text{Kuhn}}(n)=1\ -\sum_{\substack{p|n\\x^{1/v}\leq p<x^{1/u}}}\frac{1}{2}.
\end{equation*}
Here $W_{\text{Kuhn}}(n)\mathbf{1}_{(n, P(x^{1/v})) = 1} \leq 0$ unless $n$ has all its prime factors  greater or equal than $x^{1/v}$ and at most one distinct prime factor in $ [x^{1/v}, x^{1/u})$. Hence (A\ref{as:R}) holds with $R =  \lceil u(1-1/v) \rceil$. Kuhn type weights with switching have been used for detecting $P_2$ numbers, starting from the work of Chen~\cite{Chen}.
\item \label{it:Rich} Taking $w(\alpha) = \lambda(1-u\alpha) \cdot \mathbf{1}_{\alpha \in [1/v, 1/u]}$ for some $\lambda > 0$, we obtain Richert's weights
\begin{equation*}
W_{\text{Richert}}(n)=1-\lambda \sum_{\substack{p|n\\x^{1/v}\leq p<x^{1/u}}}\left(1-u\frac{\log p}{\log x}\right).
\end{equation*} 
In order to determine $R$ notice that, for $n \leq x$ with $(n, P(x^{1/v})) = 1$, we have
\begin{align}
\label{eq:WRichup}
\begin{aligned}
W_{\text{Richert}}(n) \leq 1-\lambda\sum_{\substack{p|n}} \left(1-u\frac{\log p}{\log x}\right) &= 1-\lambda \left(\omega(n)-u\frac{\log n}{\log x}\right) \\
&\leq  1-\lambda(\omega(n)-u).
\end{aligned}
\end{align}
Hence Assumption~(A\ref{as:R}) holds for $R \geq \lceil\frac{1}{\lambda} + u - 1\rceil$. Switching with Richert's weights was first utilized by Vaughan~\cite{Vaughan} and then used for instance in~\cite{Harman83} and~\cite{LZX}.
\end{enumerate}
\end{examp}

From now on we assume that Assumption~\ref{as:All} holds for some choices of parameters. In the following two sections we shall lower bound $\Sigma_1$ and upper bound $\Sigma_2$ in order to be in a position to prove~\eqref{eq:posclaims1s2}.

\section{Lower bounding $\Sigma_1$}\label{lower}
In this section we find a lower bound for $\Sigma_1$. From~\eqref{eq:S1S2def} and~\eqref{weight} we have
\begin{align*}
\begin{aligned}
\Sigma_1 &= \sum_{\substack{n\leq x\\(n,P(x^{1/v}))=1}}a_n W(n) = \sum_{\substack{n\leq x\\(n,P(x^{1/v}))=1}}a_n\left(1-\sum_{\substack{p|n\\x^{1/v}\leq p<x^{1/u}}}w_p\right) \\
&=\sum_{\substack{n\leq x\\(n,P(x^{1/v}))=1}}a_n - \sum_{\substack{x^{1/v}\leq p<x^{1/u}}}w_p \sum_{\substack{m \leq x/p \\ (m, P(x^{1/v})) = 1}} a_{mp}.
\end{aligned}
\end{align*}

Similarly to \cite[Section 25.2]{Opera}, we can apply the lower bound in the linear sieve (Lemma~\ref{linearsieve}) to the first term with $s = \theta_1 v$ and the upper bound in the linear sieve to the inner sum of the second term with 
\begin{equation*}
s =\frac{\log \frac{x^{\theta_1}}{p}}{\log x^{1/v}} = \theta_1 v - v\frac{\log p}{\log x}.
\end{equation*}
This way, we obtain
\begin{align}\label{weightedsieve}
\begin{aligned} \Sigma_1 &\geq  X_1 \left(f(\theta_1 v)-\sum_{x^{1/v}\leq p<x^{1/u}}\frac{g_1(p)}{p} w\left(\frac{\log p}{\log x}\right) F\left(\theta_1 v -v \frac{\log p}{\log x}\right)\right) \\
& \qquad \qquad \cdot \prod_{p < x^{1/v}} \left(1-\frac{g_1(p)}{p}\right) + o\left(X_1 \prod_{p < x^{1/v}} \left(1-\frac{g_1(p)}{p}\right)\right).
\end{aligned}
\end{align}

Notice first that since $g_1$ satisfies~\eqref{eq:g(p)condition}, we have, for any $z>w \geq 2$,
\begin{align*}
\begin{aligned}
\sum_{w \leq p < z} \frac{g_1(p)}{p} &= \log\Bigg( \prod_{w \leq p < z} \exp\left(\frac{g_1(p)}{p}\right)\Bigg) \leq \log\left(\prod_{w \leq p < z} \left(1-\frac{g_1(p)}{p}\right)^{-1}\right) \\
&\leq \log \frac{\log z}{\log w} + O\left(\frac{1}{\log w}\right) = \sum_{w \leq p < z} \frac{1}{p} + O\left(\frac{1}{\log w}\right).
\end{aligned}
\end{align*}
Now since $w$ and $F$ are Lipschitz functions in the relevant range, covering $[x^{1/v}, x^{1/u}]$ by $\ll \frac{\log x}{\log \log x}$ intervals of the type $[y, y \log x]$, this implies that
\begin{align*}
&\sum_{x^{1/v}\leq p<x^{1/u}}\frac{g_1(p)}{p} w\left(\frac{\log p}{\log x}\right) F\left(\theta_1 v -v \frac{\log p}{\log x}\right) \\
&\leq \sum_{x^{1/v}\leq p<x^{1/u}}\frac{1}{p} w\left(\frac{\log p}{\log x}\right) F\left(\theta_1 v -v \frac{\log p}{\log x}\right) + O\left(\frac{1}{\log \log x}\right).
\end{align*}
Furthermore, arguing as in Section~\ref{ssec:sumsoverprimes}, we see that
\begin{align*}
&\sum_{x^{1/v}\leq p<x^{1/u}}\frac{1}{p} w\left(\frac{\log p}{\log x}\right) F\left(\theta_1 v -v \frac{\log p}{\log x}\right) \\
&= (1+o(1))\int_{1/v}^{1/u} \frac{w(\alpha)}{\alpha} F\left(v\left(\theta_1 - \alpha\right) \right) d\alpha.
\end{align*}
Hence, by recalling the estimation~\eqref{weightedsieve}, we derive
\begin{equation}
\label{eq:S1genlow}
\Sigma_1 \geq X_1 \left(f(\theta_1 v)-\int_{1/v}^{1/u} \frac{w(\alpha)}{\alpha} F\left(v\left(\theta_1 - \alpha\right) \right) d\alpha +o(1)\right) \prod_{p < x^{1/v}} \left(1-\frac{g_1(p)}{p}\right).
\end{equation}

\section{Upper bounding $\Sigma_2$}\label{upper}
We now turn our attention to $\Sigma_2$. By~\eqref{eq:Sig2split} and~\eqref{eq:bdef}
\[
\Sigma_2 \leq \sum_{\substack{x^{1/v} \leq P \leq x^{1/(S+1)} \\ P = 2^jx^{1/v}}} \sum_{\ell \leq x} b_{P, \ell} \mathbf{1}_{\ell \in \mathbb{P}}.
\]
Applying the upper bound of the linear sieve with $s = 2\theta_2(\frac{\log P}{\log x}) \in (0, 2)$ and using Assumptions~(A\ref{as:swlod}) and~(A\ref{as:Xrel}), we derive
\begin{align*}
\Sigma_2 &\leq (1+o(1))\sum_{\substack{x^{1/v} \leq P \leq x^{1/(S+1)} \\ P = 2^jx^{1/v}}} \frac{2 e^\gamma}{2\theta_2(\frac{\log P}{\log x})} X_2\left(\frac{\log P}{\log x}\right) \prod_{p < x^{1/2}} \left(1-\frac{g_2(p)}{p}\right) \\
&= e^\gamma(1+o(1)) \frac{X_1}{\sum_{p \leq x} 1} \prod_{p < x^{1/2}} \left(1-\frac{g_2(p)}{p}\right)\\
& \qquad \cdot \sum_{\substack{x^{1/v} \leq P \leq x^{1/(S+1)} \\ P = 2^jx^{1/v}}} \frac{1}{\theta_2(\frac{\log P}{\log x})} \sum_{\substack{p_1m \leq x \\ P < p_1 \leq 2P \\ p \mid m \implies p > p_1 \\ S \leq \Omega(m) \leq R-1}} \left(|\mu(m)| W_P^+(m)+\frac{C_w}{\log x}\right). 
\end{align*}
Since $\theta_2 \colon [1/v, 1/(S+1)] \to [\delta, 1)$ is Lipschitz we have, for $p_1 \in (P, 2P]$,
\begin{equation*}
\left|\frac{1}{\theta_2\left(\frac{\log P}{\log x}\right)} -\frac{1}{\theta_2\left(\frac{\log p_1}{\log x}\right)}\right| = \frac{\left|\theta_2\left(\frac{\log p_1}{\log x}\right) - \theta_2\left(\frac{\log P}{\log x}\right)\right|}{\theta_2\left(\frac{\log P}{\log x}\right) \theta_2\left(\frac{\log p_1}{\log x}\right)} = O\left( \frac{1}{\theta_2 \left(\frac{\log P}{\log x}\right)^2 \log x}\right).
\end{equation*}
Furthermore, since $w$ is Lipschitz, we have, for $p_1 \in (P, 2P]$, $W_P^+(m) = W^+(pm) + O(1/\log x)$. Hence we obtain
\begin{align}
\label{eq:S2genupp}
\begin{aligned}
\Sigma_2 &\leq e^\gamma \frac{X_1}{\sum_{p \leq x} 1} \prod_{p < x^{1/2}} \left(1-\frac{g_2(p)}{p}\right) U_2 + o\left(X_1 \prod_{p < x^{1/2}} \left(1-\frac{g_2(p)}{p}\right)\right),
\end{aligned}
\end{align}
where
\begin{align}
\label{eq:U_2def}
U_2 := \sum_{\substack{p_1 m \leq x \\ p_1 \geq x^{1/v} \\ p \mid m \implies p > p_1 \\ S \leq \Omega(m) \leq R-1}} |\mu(m)| \frac{W^+(p_1 m)}{\theta_2(\frac{\log p_1}{\log x})}.
\end{align}
Now our main task in upper bounding $\Sigma_2$ is understanding $U_2$.

The most convenient way to evaluate $U_2$ depends on $u, v,$ and $w$. In the following two subsections we study $U_2$ in two important cases, first when $R$ is rather small and second when all prime divisors of $m$ are at least $x^{1/u}$. In the first case we end up with $(R-1)$ -fold integrals that can be numerically calculated whereas in the second case we have $W^+(p_1 m) = W^+(p_1)$ and we can evaluate the sum over $m$ using Lemma~\ref{le:CJ}. In Section~\ref{se:U2gen}, we will evaluate $U_2$ in a way that is useful in some more general cases.

In Sections~\ref{k=1} and~\ref{se:U2gen}, it is important to make the following observation: for some weights $w_p$ it is helpful to write $m = k\ell$, where all prime factors of $k$ are strictly smaller than $x^{1/u}$ and all prime factors of $\ell$ are at least $x^{1/u}$. Noticing that $W(p_1 m) = W(p_1 k)$, we see that
\begin{equation}
\label{eq:U_2}
U_2 = \sum_{\substack{p_1 k \leq x \\ p_1 \geq x^{1/v} \\ p \mid k \implies p \in (p_1, x^{1/u})}} \frac{W^+(p_1 k)}{\theta_2(\frac{\log p_1}{\log x})} \sum_{\substack{\ell \leq x/(p_1 k) \\ p \mid \ell \implies p \geq \max\{x^{1/u}, p_1\} \\ S-\Omega(k) \leq \Omega(\ell) \leq R-1-\Omega(k)}} |\mu(p_1k\ell)|.
\end{equation}

\subsection{Evaluating $U_2$ when $R$ is small}\label{ssec:U2Rsmall}

 We will present a way to calculate $U_2$ that holds in general but is most efficient numerically when $R$ is small, as it involves the calculation of an $(R-1)$ -fold integral. It should be compared to the analysis of Section~\ref{se:U2gen}.

We write in~\eqref{eq:U_2def} $m = p_2 \dotsm p_J$ with $p_J > \dotsc > p_2 > p_1$ and $J \in \{S+1, \dotsc, R\}$, so that
\begin{align*}
U_2 &= \sum_{J = S+1}^{R} \sum_{\substack{p_1 \dotsm p_J \leq x \\ p_J > \dotsb > p_1 \geq x^{1/v}}} \frac{\left(1-\sum_{p \mid p_1 \dotsm p_J} w(\frac{\log p}{\log x})\right)^+}{\theta_2(\frac{\log p_1}{\log x})},
\end{align*}
where $(y)^+ = \max\{y, 0\}$. Rearranging, we obtain
\begin{align}
\label{eq:U2sumre}
U_2 &= \sum_{J = S+1}^{R} \sum_{\substack{p_1 \dotsm p_{J-1} \leq x/p_{J-1} \\  p_{J-1} > \dotsb > p_1 \geq x^{1/v}}} \frac{1}{\theta_2(\frac{\log p_1}{\log x})} \sum_{\substack{p_J \leq \frac{x}{p_1 \dotsm p_{J-1}} \\ p_{J} > p_{J-1}}} \left(1-\sum_{p \mid p_1 \dotsm p_J} w\left(\frac{\log p}{\log x}\right)\right)^+.
\end{align}

Let us show that, now that we have added the condition $p_1 \dotsm p_{J-1} \leq x/p_{J-1}$, removing the condition $p_{J} > p_{J-1}$ yields an error of size $O(x/(\log x)^2)$. Indeed by the prime number theorem and rearranging,
\begin{align*}
&\sum_{J = S+1}^{R} \sum_{\substack{p_1 \dotsm p_{J-1} \leq x/p_{J-1} \\  p_{J-1} > \dotsb > p_1 \geq x^{1/v}}} \sum_{\substack{p_J \leq \frac{x}{p_1 \dotsm p_{J-1}} \\ p_{J} \leq p_{J-1}}} 1 \ll \sum_{J = S+1}^{R} \sum_{\substack{p_1 \dotsm p_{J-2} \leq x/p_{J-2}^2 \\  p_{J-2} > \dotsb > p_1 \geq x^{1/v}}} \sum_{p_{J-1} \leq (\frac{x}{p_1 \dotsm p_{J-2}})^{\frac{1}{2}}} \frac{p_{J-1}}{\log x} \\
& \ll \sum_{J = S+1}^{R} \sum_{\substack{p_1 \dotsm p_{J-2} \leq x/p_{J-2}^2 \\  p_{J-2} > \dotsb > p_1 \geq x^{1/v}}} \frac{x}{p_1 \dotsm p_{J-2} (\log x)^2} \ll \frac{x}{(\log x)^2}.
\end{align*}

Observe also that $p_J \leq x/(p_1 \dotsm p_{J-1}\log x)$ makes a contribution $O(x/(\log x)^2)$ to $U_2$. On the other hand, when  $x/(p_1 \dotsm p_{J-1}\log x)< p_J\leq x/(p_1 \dotsm p_{J-1})$, since $w$ is Lipschitz, we may approximate 
\begin{equation}
\label{eq:wpJapprox}
w\left(\frac{\log p_J}{\log x}\right) = w\left(\frac{\log\frac{x}{p_1 \dotsm p_{J-1}}}{\log x}\right) +O\left(\frac{\log \log x}{\log x}\right).
\end{equation}

Hence, by using \eqref{eq:wpJapprox} and then the prime number theorem, the sum over $p_J$ in~\eqref{eq:U2sumre} (with the condition $p_J > p_{J-1}$ removed) equals 
\begin{align*}
\frac{x \cdot \left(1- \sum_{p \mid p_1 \dotsm p_{J-1}} w\left(\frac{\log p}{\log x}\right)-w\left(\frac{\log \frac{x}{p_1 \dotsm p_{J-1}}}{\log x}\right)\right)^+}{p_1 \dotsm p_{J-1} \log\frac{x}{p_1 \dotsm p_{J-1}}} + O\left(\frac{x \log \log x}{(\log x)^2 p_1 \dotsm p_{J-1}}\right).
\end{align*}
Substituting the above expression into~\eqref{eq:U2sumre} and arguing as in Section~\ref{ssec:sumsoverprimes} we obtain that
\begin{align}
\label{eq:U2Rsmall}
\begin{aligned}
U_2 &= \frac{x}{\log x}\sum_{J = S+1}^{R} \idotsint\limits_{\substack{\alpha_{J-1} > \dotsc > \alpha_1 > 1/v \\ \sum_{j=1}^{J-1}\alpha_j \leq 1-\alpha_{J-1}}} \frac{\left(1- \sum_{\alpha \in \{\alpha_1, \dotsc, \alpha_{J-1}, 1-\sum_{j=1}^{J-1}\alpha_j\}} w\left(\alpha\right)\right)^+}{\theta_2(\alpha_1) \alpha_1 \dotsm \alpha_{J-1} (1-\sum_{j=1}^{J-1}\alpha_j)} d\alpha_1 \dotsm d\alpha_{J-1} \\
& \qquad + O\left(\frac{x \log \log x}{(\log x)^2}\right).
\end{aligned}
\end{align}
We shall use this formula in Section~\ref{ssec:HarRich}.

\subsection{Evaluating $U_2$ when $k = 1$} \label{k=1}
Recall that in~\eqref{eq:U_2} we have $p_1 \geq x^{1/v}$ and all the prime factors of $k$ are from the interval $(p_1, x^{1/u})$. Therefore, in Example~\ref{ex:weights}\eqref{it:triv}--\eqref{it:Kuhn}, $W^+(p_1 k) \neq 0$ only when $k = 1$. In such case, Lemma~\ref{le:CJ} gives
\begin{align*}
U_2 &\leq \sum_{\substack{x^{1/v} \leq p_1 \leq x^{1/(S+1)}}} \frac{W^+(p_1)}{\theta_2(\frac{\log p_1}{\log x})} C_{S} \left(\frac{\log \frac{x}{p_1}}{\log \max\{x^{1/u}, p_1\}}\right) \frac{x}{p_1 \log\frac{x}{p_1}}  +O\left(\frac{x}{(\log x)^2}\right).
\end{align*}
Arguing as in Section~\ref{ssec:sumsoverprimes}, we see that
\begin{align*}
\begin{aligned}
U_2 &= \frac{x}{\log x} \int_{1/v}^{1/(S+1)} \frac{(1-w(\alpha)) C_{S}\left(\min\{u(1-\alpha), \frac{1-\alpha}{\alpha}\}\right)}{\theta_2(\alpha) \alpha(1-\alpha)} d\alpha  +O\left(\frac{x}{(\log x)^2}\right).
\end{aligned}
\end{align*}
In case $u \geq S+1$, we obtain
\begin{align}
\label{eq:U2dec2}
\begin{aligned}
U_2 &= \frac{x}{\log x}\left( \int_{1/v}^{1/u} \frac{(1-w(\alpha)) C_{S}(u(1-\alpha))}{\theta_2(\alpha) \alpha(1-\alpha)} d\alpha  + \int_{1/u}^{1/(S+1)} \frac{C_{S}\left(\frac{1-\alpha}{\alpha}\right)}{\theta_2(\alpha) \alpha(1-\alpha)} d\alpha\right)  \\
&\phantom{xxxxxxx}+O\left(\frac{x}{(\log x)^2}\right) .
\end{aligned}
\end{align}
We shall use this formula in Sections~\ref{ssec:HarTriv}, and~\ref{ssec:HarKuhn}.

\section{Case study: Diophantine approximation with a prime and an almost-prime}
\label{se:Harman}
In this section we demonstrate what different choices of weights give in the problem addressed in Theorem~\ref{th:HarmanImprove}. We will assume $\rho \in (0, 1/5)$ --- in any case larger $\rho$ are out of reach.

Following Harman~\cite[Section 2]{Harman83}, we first rationalize the problem. Recall we want to find infinitely many solutions to
\[
|\lambda_0 + \lambda_1 p + \lambda_2 P_3| < p^{-\rho}
\]
with $\lambda_0 \in \mathbb{R}$ and $\lambda_1, \lambda_2 \in \mathbb{R}\setminus\{0\}$ such that $\lambda_1/\lambda_2$ is negative and irrational. By dividing by $-\lambda_2$, we can clearly assume that $\lambda_1 > 0$ and $\lambda_2 = -1$ if we increase $\rho$ by $\varepsilon'>0$. Hence we look for solutions to 
\begin{equation}
\label{eq:lam2rem}
|\lambda_0 + \lambda_1 p - P_3| < p^{-\rho}.
\end{equation}

Moreover, let $a/q$ be a convergent to the continued fraction for $\lambda_1$ and let $X = q^{8/5}$. We assume that $q$ is large in terms of $\lambda_0, \lambda_1, 1/\lambda_1$.  We can write $\lambda_0 = \frac{b}{q} + \nu$ with $b \in \mathbb{Z}$ and $|\nu| < 1/q$. We set $x = \max\{X, \frac{aX+b}{q}\}$ and focus on finding $(p,P_3)\in([1,x]\cap\mathbb{Z})^2$ such that~\eqref{eq:lam2rem} holds. 

With our choices of parameters $|1/q+p/q^2|\ll X^{-\frac{1}{4}}$ for $p \leq X$ and thus it suffices to show that the number of solutions to
\[
\left|\frac{b}{q} + \frac{a}{q} p - P_3\right| < \frac{X^{-\rho}}{2}
\]
tends to infinity with $X$, where $p \leq X$ and $P_3 \leq \frac{aX+b}{q}$. Thereupon, we define
\[
\mathcal{A} := \left\{(n_1, n_2) \in (\mathbb{Z} \cap [1, x])^2 \colon \left|\frac{b}{q} + \frac{a}{q} n_1 - n_2 \right| < \frac{X^{-\rho}}{2}\right\}.
\]

Let us write now $\Vert y \Vert$ for the distance from $y$ to the nearest integer(s) and $[y]$ for the nearest integer to $y$ when $\Vert y \Vert \neq 1/2$. As our aim is to find a pair $(p,P_3)\in \mathcal{A}$, we set $S = 3$. Notice that 
\[
a_n = \sum_{(p, n) \in \mathcal{A}} 1 = 
\begin{cases}
1 & \text{if $n = \left[\frac{b+pa}{q}\right]$ for some $p \leq X$ with $\Vert\frac{b+pa}{q}\Vert < \frac{X^{-\rho}}{2}$;} \\
0 & \text{otherwise,} 
\end{cases}
\]
so that $a_n$ is the characteristic function of the set $\mathcal{A}$ introduced in~\cite[Section 2]{Harman83}. 

We use the letters $\eta$ and $\varepsilon$ for arbitrarily small positive numbers and allow implied constants to depend on them. Thus, \cite[Lemma 9]{Harman83} tells us that assumption (A\ref{as:lod}) holds with $\theta_1 = 1/3-\rho-\varepsilon, g_1 = 1$, and $X_1 = X^{-\rho} \sum_{p \leq X} 1$.

Let us next verify Assumption (A\ref{as:squares}). First note that by estimating trivially $a_{np^2} \leq 1$, we have
\begin{equation}
\label{eq:A3largep}
\sum_{p \geq x^{1/3}} \sum_{\substack{n \leq x/p^2 \\ (n, P(x^{1/v})) = 1}} a_{np^2} \leq \sum_{d \geq x^{1/3}} \frac{x}{d^2} \ll x^{1-\frac{1}{3}} \leq \frac{X_1}{2(\log x)^2},
\end{equation}
where we have used that $x\asymp X$ and that $\rho<1/5$. On the other hand
\begin{align*}
&\sum_{x^{1/v} \leq p < x^{1/3}} \sum_{\substack{n \leq x/p^2 \\ (n, P(x^{1/v})) = 1}} a_{np^2} \\
&\leq \sum_{x^{1/v} \leq p < x^{1/3}} \left|\left\{(m, n) \in ([1,x]\cap\mathbb{Z})^2  \colon \left|\frac{b}{q} + \frac{a}{q} m -p^2 n\right| < \frac{X^{-\rho}}{2}\right\}\right| \\
& \leq \sum_{x^{1/v} \leq p < x^{1/3}} \left|\left\{m \leq x \colon \left\Vert\frac{b}{qp^2} + \frac{a}{qp^2} m \right\Vert < \frac{X^{-\rho}}{2p^2}\right\}\right|.
\end{align*}
Splitting $b+am$ into residue classes (mod ${qp^2}$), we see that this is 
\begin{align*}
&\leq \sum_{x^{1/v} \leq p < x^{1/3}}\ \sum_{|k|\leq qp^2 \cdot \frac{X^{-\rho}}{2p^2} }\ \sum_{\substack{m\leq x\\b+am\equiv k\ (\text{mod } qp^2)}}1\\
&\leq \sum_{\substack{x^{1/v} \leq p < x^{1/3} \\ p \nmid a}} \left(qX^{-\rho} + 1\right) \left(\frac{x}{qp^2}+1\right) + \sum_{\substack{x^{1/v} \leq p < x^{1/3} \\ p \mid a}} \sum_{\substack{|k| \leq \frac{qX^{-\rho}}{2} \\ p \mid k-b}} \frac{x}{q} \\
&\leq 2\sum_{x^{1/v} \leq p < x^{1/3}} \left(\frac{x X^{-\rho}}{p^2} + q X^{-\rho}\right) + \sum_{\substack{x^{1/v} \leq p < x^{1/3} \\ p \mid a}} \left(\frac{qX^{-\rho}}{p}+1\right) \frac{x}{q}\\
&\leq x^{1-1/v}X^{-\rho} + qx^{1/3}X^{-\rho} + vx^{1-1/v}X^{-\rho} + v\frac{x}{q} \leq \frac{X_1}{2(\log x)^2}.
\end{align*}
Now Assumption (A\ref{as:squares}) follows by combining this with~\eqref{eq:A3largep}.

On the other hand, we write, for $\alpha \in [1/v, 1/(S+1)]$ and $n \leq \frac{aX+b}{q}$,
\[
v(\alpha)_n = 
\begin{cases}
|\mu(m)| W_{x^\alpha}^+(m) + \frac{C_w}{\log x} & 
\begin{aligned} &\text{if $n = p_1 m$ with $x^\alpha < p_1 \leq 2x^\alpha$}, \\ &\qquad \text{$p \mid m \implies p > p_1, S \leq \Omega(m) \leq R-1$;} 
\end{aligned} \\
0 & \text{otherwise}.
\end{cases}
\]
so that
\[
b_{x^\alpha, \ell} = \sum_{\substack{p_1m \leq x \\ (\ell, p_1m) \in \mathcal{A} \\ x^\alpha < p_1 \leq 2x^\alpha \\ p \mid m \implies p > p_1 \\ S \leq \Omega(m) \leq R-1}} \left(|\mu(m)| W^+_{x^\alpha}(m) + \frac{C_w}{\log x}\right) = \mathbf{1}_{\Vert (b+\ell a)/q \Vert < X^{-\rho}/2} \ v(\alpha)_{[(b+\ell a)/q]}.
\]
Now $b_{x^\alpha, \ell}$ is a slight variant of $\mathcal{A}^\ast(\alpha)$ in~\cite[Lemma 10]{Harman83} --- the difference is that instead of the set $\mathcal{N}(\alpha)$ in~\cite{Harman83}, we have a sequence $(v(\alpha)_n)$. However the structure is similarly bilinear, with support on numbers $n = p_1 m $ with $p_1 \in (X^\alpha, 2X^\alpha]$ and coefficients of the type 
\[
\nu(\alpha)_n = \nu(\alpha)_{p_1m} = \mathbf{1}_{p_1 \in \mathbb{P} \cap (X^\alpha, 2X^\alpha]} a_m
\]
with $a_m \ll 1$ (as in~\cite{Harman83}, there is additionally a mild cross-condition $p \mid m \implies p > p_1$ but it can be easily removed for instance by using Perron's formula, see e.g.~\cite[Section 3.2]{HarmanBook}). Hence, similarly to~\cite[Lemma 10]{Harman83}, we have that (A\ref{as:swlod}) holds with $\theta_2(\alpha) = \frac{1}{2}(1-\alpha)-\rho-2\eta$, $g_2 = 1$. Also,
\[
X_2(\alpha) = \frac{X^{-\rho} q}{a} \sum_{n \leq \frac{aX+b}{q}} v(\alpha)_n,
\]
and by the prime number theorem, we see that 
\[
\frac{q}{a} \sum_{n \leq \frac{aX+b}{q}} v(\alpha)_n=(1+o(1)) \sum_{n \leq X} v(\alpha)_n,
\]
since $X$ is large in terms of the size of $a/q$. Thus, we see that Assumption~(A\ref{as:Xrel}) holds.

Next, recall that our aim is to show that
\begin{equation}
\label{eq:Harposclaim}
\Sigma_1 - \Sigma_2 \gg \frac{X_1}{\log x}
\end{equation}
with $\rho<1/5$ as large as possible. In the following sections, we are going to experiment with different weights, inspired by Example \ref{ex:weights}.

Before dealing with \eqref{eq:Harposclaim}, observe that \eqref{eq:S1genlow} gives
\begin{equation}
\label{eq:S1Harman}
\Sigma_1 \geq \frac{X_1}{\log x} e^{-\gamma} v \left(f(\theta_1 v)-\int_{1/v}^{1/u} \frac{w(\alpha)}{\alpha} F\left(v\left(\theta_1 - \alpha\right) \right) d\alpha +o(1)\right),
\end{equation}
where we have used Mertens' third theorem $\prod_{p<x^{1/v}}\left(1-\frac{1}{p}\right)=\frac{e^{-\gamma}v}{\log x}(1+o(1))$.

\subsection{Richert's weights and Harman's argument}
\label{ssec:HarRich}
Harman~\cite{Harman83} used Richert's weights (Example~\ref{ex:weights}\eqref{it:Rich}) with parameters
\begin{equation}\label{parameters}
v = \frac{4}{\theta_1}, \quad u = \frac{1+\eta}{\theta_1}, \quad \lambda = \frac{1}{5-u-\eta},
\end{equation}
where $\eta > 0$ is small. With the above choice of parameters, we have $\theta_1v=4$ and $v(\theta_1-\alpha)\leq 3$ for $\alpha \in [1/v, 1/u]$. Thus, by applying \eqref{eq:fFdef} and~\eqref{elementaryf} and recalling that $w(\alpha) = \lambda(1-u\alpha) \cdot \mathbf{1}_{\alpha \in [1/v, 1/u]}$, we derive from \eqref{eq:S1Harman} that
\[
\Sigma_1 \geq \frac{2X_1}{\log x} \left(\frac{\log 3}{\theta_1}- \int_{1/v}^{1/u} \frac{\lambda(1-u\alpha)}{\alpha (\theta_1 - \alpha)} d\alpha +o(1)\right).
\]
According to Example~\ref{ex:weights}\eqref{it:Rich}, we can take $R = \lceil \frac{1}{\lambda} + u - 1\rceil = 4$. Thus, upon using~\eqref{eq:S2genupp} and~\eqref{eq:U2Rsmall}, we obtain
\begin{align}
\label{eq:HarS2ActUpp}
\begin{aligned}
\Sigma_2 &\leq \frac{2X_1}{\log x} \iiint\limits_{\substack{\alpha_3 > \alpha_2 > \alpha_1 > 1/v \\\alpha_1+\alpha_2 + 2\alpha_3 \leq 1}} \frac{\left(1-w(\alpha_1)-w(\alpha_2)-w(\alpha_3)-w(1-\sum_{j=1}^3\alpha_j)\right)^+}{\theta_2(\alpha_1) \alpha_1 \alpha_2 \alpha_3 (1-\sum_{j=1}^3\alpha_j)} d\alpha_1 d\alpha_{2} d\alpha_3 \\
& \phantom{xxxxxxxxxxxxxxx}+ o\left(\frac{X_1}{\log x}\right).
\end{aligned}
\end{align}

Instead of evaluating this integral, Harman~\cite[Section 3]{Harman83} bounded $\Sigma_2$ using a pointwise upper bound for $W_{\text{Richert}}(n)$, so his treatment of $\Sigma_2$ corresponds to the use of trivial weights (Example~\ref{ex:weights}\eqref{it:triv}). Harman obtained the pointwise upper bound by noticing that~ \eqref{eq:WRichup} implies that, when $n\leq x$ is square-free and such that $(n,P(x^{1/v}))=1$ and $\Omega(n) = 4$, we have 
\[
W_{\text{Richert}}(n) \leq 1-\lambda(\Omega(n)-u) = \lambda\left(\frac{1}{\lambda}-(4-u)\right) = \lambda(1-\eta).
\]
Harman~\cite[Section 3]{Harman83} used this pointwise upper bound to obtain alternatively
\begin{align}
\label{eq:HarS2OwnUpp}
\Sigma_2 &\leq \frac{2X_1}{\log x} \iiint\limits_{\substack{\alpha_3 > \alpha_2 > \alpha_1 > 1/v \\\alpha_1+\alpha_2 + 2\alpha_3 \leq 1}} \frac{\lambda}{\theta_2(\alpha_1) \alpha_1 \alpha_2 \alpha_3 (1-\sum_{j=1}^3\alpha_j)} d\alpha_1 d\alpha_{2} d\alpha_3 + O\left(\frac{X_1}{(\log x)^2}\right).
\end{align}
This way, he obtained that~\eqref{eq:Harposclaim} holds when $\rho = 1/300$ and in fact $\rho = 1/150$ would be still admissible. 

By using the formula~\eqref{eq:HarS2ActUpp} instead of~\eqref{eq:HarS2OwnUpp}, thus using Richert's weights instead of the trivial ones also for $\Sigma_2$, one obtains that~\eqref{eq:Harposclaim} holds when $\rho = 1/25$ keeping the choice of parameters \eqref{parameters}. It turns out to be better to choose
\begin{equation*}
v = 19.2, \quad u = 4.1, \quad \text{and} \quad \lambda = \frac{1}{5.5-u-\eta}.
\end{equation*}
With these choices, we have that $(S,R)=(3,5)$ instead of $(3,4)$, so that the upper bound for $\Sigma_2$ following from~\eqref{eq:S2genupp} and~\eqref{eq:U2Rsmall} contains now a four-dimensional integral, and we obtain $\rho = 0.075$.

\subsection{Trivial weights}
\label{ssec:HarTriv}
As per Example~\ref{ex:weights}\eqref{it:triv}, we have $u = v$ and $w(\alpha) = 0$, so that by~\eqref{eq:S1Harman}
\[
\Sigma_1 \geq \frac{X_1}{\log x} e^{-\gamma} v \left(f(\theta_1 v)+o(1)\right).
\]
Suppose now that $v> S+1=4$. Then, by using~\eqref{eq:S2genupp} and~\eqref{eq:U2dec2}, we derive
\[
\Sigma_2 \leq \frac{2X_1}{\log x} \int_{1/v}^{1/4} \frac{C_{3}\left(\frac{1-\alpha}{\alpha}\right)}{\theta_2(\alpha) \alpha(1-\alpha)} d\alpha  +o\left(\frac{X_1}{\log x}\right).
\]
Evaluating the integrals shows that~\eqref{eq:Harposclaim} holds when $\rho = 1/16$ and $v = 10.8$.

\subsection{Kuhn's weights and the proof of Theorem \ref{th:HarmanImprove}}
\label{ssec:HarKuhn}
As per Example~\ref{ex:weights}\eqref{it:Kuhn}, we have $w(\alpha) = \frac{1}{2} \cdot \mathbf{1}_{\alpha \in [1/v, 1/u]}$, so that by~\eqref{eq:S1Harman}
\[
\Sigma_1 \geq \frac{X_1}{\log x} e^{-\gamma} v \left(f(\theta_1 v)-\int_{1/v}^{1/u} \frac{1}{2\alpha} F\left(v\left(\theta_1 - \alpha\right) \right) d\alpha +o(1)\right).
\]
Further, by~\eqref{eq:S2genupp} and~\eqref{eq:U2dec2}, we have, when $u \geq S+1 = 4$,
\begin{align*}
\Sigma_2 &\leq \frac{2X_1}{\log x}\left(\int_{1/v}^{1/u} \frac{C_{3}(u(1-\alpha))}{2 \theta_2(\alpha) \alpha(1-\alpha)} d\alpha
+\int_{1/u}^{1/4} \frac{C_{3}\left(\frac{1-\alpha}{\alpha}\right)}{\theta_2(\alpha) \alpha(1-\alpha)} d\alpha  +o(1)\right).
\end{align*}

Evaluating the above two integrals and taking $u=6.6$ and $v = 23$ shows that~\eqref{eq:Harposclaim} holds when $\rho = 0.092$, which implies Theorem~\ref{th:HarmanImprove}.

\section{Proof of Theorem~\ref{th:constLOD}}\label{proof}
Write $\theta = 0.267$ and let parameters be as in Theorem~\ref{th:constLOD} (in particular we apply Kuhn's weights). Similarly to Section~\ref{ssec:HarKuhn}
\[
\Sigma_1 \geq \frac{X_1}{\log x} e^{-\gamma} v \left(f(\theta v)-\int_{1/v}^{1/u} \frac{1}{2\alpha} F\left(v\left(\theta - \alpha\right) \right) d\alpha +o(1)\right),
\]
and (since $u > S+1 = 4$),
\begin{align*}
\Sigma_2 \leq \frac{2X_1}{\log x}\left(\int_{1/v}^{1/u} \frac{C_{3}(u(1-\alpha))}{2 \theta \alpha(1-\alpha)} d\alpha
+\int_{1/u}^{1/4} \frac{C_{3}\left(\frac{1-\alpha}{\alpha}\right)}{\theta \alpha(1-\alpha)} d\alpha  +o(1)\right).
\end{align*}
Evaluating the integrals with $u = 6$ and $v = 20$ we see that~\eqref{eq:posclaims1s2} holds.

\section{Evaluating $U_2$ in general}
\label{se:U2gen}
We return once more to evaluating
\begin{align}
\label{eq:U2gen}
\begin{aligned}
U_2 &= \sum_{\substack{p_1 m \leq x \\ p_1 \geq x^{1/v} \\ p \mid m \implies p > p_1 \\ S \leq \Omega(m) \leq R-1}} |\mu(m)| \frac{W^+(p_1 m)}{\theta_2(\frac{\log p_1}{\log x})} \\
&= \sum_{\substack{p_1 k \leq x \\ p_1 \geq x^{1/v} \\ p \mid k \implies p \in (p_1, x^{1/u})}} \frac{W^+(p_1 k)}{\theta_2(\frac{\log p_1}{\log x})} \sum_{\substack{\ell \leq x/(p_1 k) \\ p \mid \ell \implies p \geq \max\{x^{1/u}, p_1\} \\ S-\Omega(k) \leq \Omega(\ell) \leq R-1-\Omega(k)}} |\mu(p_1 k \ell)|.
\end{aligned}
\end{align}
The analysis in Section~\ref{ssec:U2Rsmall} was most efficient when $R$ is quite small. In Section~\ref{k=1} $R$ was allowed to be large as long as only $k = 1$ is contained in the sum. Here we consider the case when $k$ might have a few prime factors, and work under the following extra assumption.

\begin{itemize}
\item[(A6)] We have $W(n)\mathbf{1}_{(n, P(x^{1/v})) = 1} \leq 0$ whenever $n$ has more than $R_0$ prime factors in the interval $[x^{1/v}, x^{1/u})$.
\end{itemize}

Now Section~\ref{k=1} handles the cases $R_0 \in \{0, 1\}$ whereas the analysis in this section stays efficient as long as $R_0$ is rather small, which is often the case when we look for $P_S$ numbers with a small $S$.

Using Lemma~\ref{le:CJ} to the inner sum on the right hand side of~\eqref{eq:U2gen} and separating the contribution of $\ell = 1$, we have $U_2 \leq U_{2, 1} + U_{2, 2}$, where
\begin{align*}
U_{2,1}:=&\sum_{\substack{p_1 k \leq x \\ x^{1/v} \leq p_1 \leq x^{1/u} \\ p \mid k \implies p \in (p_1, x^{1/u}) \\ S \leq \Omega(k) \leq R_0 - 1}}  \frac{W^+(p_1 k)}{\theta_2(\frac{\log p_1}{\log x})} |\mu(p_1 k)| \\
U_{2,2}:=&\sum_{\substack{p_1 k \leq x \\ p_1 \geq x^{1/v} \\ p \mid k \implies p \in (p_1, x^{1/u}) \\\Omega(k) \leq R_0-1}} |\mu(p_1 k)| \frac{W^+(p_1 k)}{\theta_2(\frac{\log p_1}{\log x})} C_{S-\Omega(k)}\left(\frac{\log \frac{x}{p_1 k}}{\log \max\{x^{1/u}, p_1\}}\right) \frac{x}{p_1 k \log\frac{x}{p_1 k}}\\
&\ +\ O\Bigg( \sum_{\substack{p_1 k \leq x \\ p_1 \geq x^{1/v} \\ p \mid k \implies p \in (p_1, x^{1/u})}}\frac{x}{p_1 k (\log x)^2}\Bigg).
\end{align*}

In principle we could evaluate also the sums over $k$ by looking for a recursive asymptotic formula for the number of integers with all prime factors in a given interval, and using partial summation --- however, the outcome would look quite formidable. Instead, we recall that the functions $C_{S-\Omega(k)}$ are defined through recursion in Section~\ref{ssec:rough} and can be evaluated quite fast. With that numerical remark at hand, we may work out an asymptotic formula via splitting $U_{2, 1}$ and $U_{2, 2}$ according to the number of prime factors of $k$ and then using the prime number theorem.

We write $k = p_2 \dotsm p_J$ and obtain
\begin{align*}
U_{2, 1} &= \sum_{J = S+1}^{R_0} \sum_{\substack{p_1 \dotsm p_J \leq x \\ x^{1/v} \leq p_1 < \dotsc < p_J < x^{1/u}}} \frac{\left(1-\sum_{p \mid p_1 \dotsm p_J} w(\frac{\log p}{\log x})\right)^+}{\theta_2(\frac{\log p_1}{\log x})} \\
&= \sum_{J = S+1}^{R_0} \sum_{\substack{p_1 \dotsm p_{J-1} \leq x/p_{J-1} \\ x^{1/v} \leq p_1 < \dotsc < p_{J-1} < x^{1/u}}} \frac{1}{\theta_2(\frac{\log p_1}{\log x})} \sum_{\substack{p_J \leq \frac{x}{p_1 \dotsm p_{J-1}} \\ p_{J-1} < p_J < x^{1/u}}} \left(1-\sum_{p \mid p_1 \dotsm p_J} w\left(\frac{\log p}{\log x}\right)\right)^+.
\end{align*}

Here adding the condition $p_1 \dotsm p_{J-1} \geq x^{1-1/u}$ introduces an error of the size
\[
\ll \sum_{\substack{n \leq x^{1-1/u} \\ (n, P(x^{1/v})) = 1}} \sum_{p_J < x^{1/u}} 1 \ll \frac{x}{(\log x)^2}.
\]
Using similar arguments to Section~\ref{ssec:U2Rsmall} it is not difficult to see that we can, with a negligible error, also remove the condition $p_{J-1} < p_J$ and make the approximation~\eqref{eq:wpJapprox}.
Hence, $U_{2,1}$ equals
\begin{align*}
\sum_{J = S+1}^{R_0} \sum_{\substack{x^{1-1/u} \leq p_1 \dotsm p_{J-1} \leq x/p_{J-1} \\ x^{1/v} \leq p_1 < \dotsc < p_{J-1} < x^{1/u}}} \frac{\left(1- F(p_1,...,p_{J-1})\right)^+}{\theta_2(\frac{\log p_1}{\log x})} \sum_{\substack{p_J \leq \frac{x}{p_1 \dotsm p_{J-1}}}} 1+o\left(\frac{x}{\log x}\right),
\end{align*}
where
\begin{align*}
F(p_1,...,p_{J-1}):=\sum_{p \mid p_1 \dotsm p_{J-1}} w\left(\frac{\log p}{\log x}\right)+w\left(\frac{\log \frac{x}{p_1 \dotsm p_{J-1}}}{\log x}\right).
\end{align*}
Arguing as in Section~\ref{ssec:sumsoverprimes} we see that
\begin{equation*}
U_{2,1}=\frac{x}{\log x}(M_{1}+o(1)),
\end{equation*}
where $M_{1}$ equals
\begin{align*}
\sum_{J = S+1}^{R_0} \idotsint\limits_{\substack{\frac{1}{v} < \alpha_1 < \dotsc < \alpha_{J-1} < \frac{1}{u} \\ 1-\frac{1}{u} \leq \sum_{j=1}^{J-1}\alpha_j \leq 1-\alpha_{J-1}}} \frac{\left(1- \sum_{\alpha \in \{\alpha_1, \dotsc, \alpha_{J-1}, 1-\sum_{j=1}^{J-1}\alpha_j\}} w\left(\alpha\right)\right)^+}{\theta_2(\alpha_1) \alpha_1 \dotsm \alpha_{J-1} (1-\sum_{j=1}^{J-1}\alpha_j)} d\alpha_1 \dotsm d\alpha_{J-1}.
\end{align*}

In $U_{2, 2}$ we separate the case $p_1 \geq x^{1/u}$ (in which case $k = 1$) and argue similarly. We obtain
\begin{equation*}
U_{2,2}=\frac{x}{\log x}(M_{2}+o(1)),
\end{equation*}
where
\begin{align*}
& M_2 :=\int_{1/u}^{1/(S+1)} \frac{C_S\left(\frac{1}{\alpha}(1-\alpha)\right)}{\theta_2(\alpha_1) \alpha_1 (1-\alpha_1)} \\ 
&+\sum_{J = 1}^{R_0} \idotsint\limits_{\substack{\frac{1}{v} < \alpha_1 < \dotsc < \alpha_{J} < \frac{1}{u} \\ \sum_{j=1}^J\alpha_j \leq 1-\frac{S-J}{u}}} \frac{\left(1- \sum_{\alpha \in \{\alpha_1, \dotsc, \alpha_{J}\}} w\left(\alpha\right)\right)^+  C_{S+1-J}(u(1-\sum_{j=1}^J\alpha_j))}{\theta_2(\alpha_1) \alpha_1 \dotsm \alpha_{J} (1-\sum_{j=1}^J\alpha_j)} d\alpha_1 \dotsm d\alpha_{J}.
\end{align*}
Note that our work in Section~\ref{k=1} corresponds to the case $R_0 = 1$ and, naturally, the result is the same in this case.

\section*{Acknowledgements}
The authors would like to thank the anonymous referee for very careful reading of the paper. Both authors were supported by the Finnish Centre of Excellence in Randomness and Structures (Research Council of Finland grants no. 346307 and 364214), and the first author was additionally supported by Research Council of Finland grant no. 333707.

\bibliographystyle{plain}
\bibliography{refs1}

\end{document}